\theoremstyle{plain}
\newtheorem{theorem}{Theorem}[section]
\newtheorem{lemma}[theorem]{Lemma}
\theoremstyle{definition}
\theoremstyle{remark}
\numberwithin{equation}{section} \theoremstyle{corollary}
\newenvironment{proof}[1][Proof]{\begin{trivlist}
\item[\hskip \labelsep {\bfseries #1}]}{\end{trivlist}}
\begin{document}
\begin{frontmatter}

\title{Analyticity of Bounded Solutions of Analytic State-Dependent Delay Differential  Equations 
}
 
\author[]{Qingwen Hu}\ead{\tt qingwen@utdallas.edu}
\address[]{Department of Mathematical Sciences,  The University of Texas at Dallas, Richardson, TX, 75080}

 \small{\today}


\maketitle
 
\begin{abstract}
We study the analyticity of bounded solutions of systems of analytic state-dependent delay differential equations. We obtain the analyticity of solutions by transforming the system of  state-dependent delay equations into an abstract ordinary differential equation in a subspace of the sequence space $l^{\infty}(\mathbb{R}^{N+1})$ and prove the existence of complex extension of the bounded solutions. An example is given to illustrate the general results.
\end{abstract}

\begin{keyword} State-dependent delay\sep analyticity\sep bounded solutions

\end{keyword}

 \end{frontmatter}

\thispagestyle{plain}

 
\pagestyle{plain}

\section{Introduction}\label{SOPS-4-1}

The  analyticity of bounded solutions of delay differential equations with constant delay such as the well-known Wright's equation was established in work of  Nussbaum \cite{Nussbaum-analyticity}. It is natural  to conjecture that this analyticity result holds true for many differential equations with state-dependent delay  such as
\begin{align}\label{eqn-2-oldd}\left\{
 \begin{aligned}
  \dot{x}(t)&=  f(x(t),\,x(t- \tau)),\\
   \tau & =r(x(t)),
 \end{aligned}
\right.
\end{align}
with analytic $f$ and $r$. In this paper, we solve this conjecture. 

We should  remark that the work of   Mallet-Paret and  Nussbaum \cite{Nussbaum-2} also presented some examples where bounded solutions are no-longer analytic, while  Krisztin \cite{Tibor} showed that globally defined bounded solutions of threshold type delay equations are analytic.   
Then an important theoretical problem is what would be the most general form of state-dependent delay differential equations for which the conjecture remains true for differential equations with state-dependent delay. We also notice that establishing the analyticity of bounded solutions such as periodic solutions is essential for describing the global dynamics of some state-dependent delay differential equations. For example, in \cite{HWZ} the nonexistence of a nonconstant $p$-periodic real-valued solution which is constant in a small interval in $\mathbb{R}$ was assumed in order to obtain the global continuation of periodic solutions of the following system
\begin{align}\label{eqn-2-old}\left\{
 \begin{aligned}
  \dot{x}(t)=&  f(x(t),\,x(t- \tau(t))),\\
   \dot{\tau}(t)=&   g(x(t), \tau (t)),
 \end{aligned}
\right.
\end{align}
with analytic $f$ and $g$. Specifically, it was needed to exclude the case where there is a nonconstant $p-$periodic solution for which 
\begin{align}
 \tau(t)=\tau_0,\,t\in I+kp,  k\in\mathbb{Z} 
\end{align} 
where $\tau_0>0$ is a constant and $I$ is an interval in $\mathbb{R}$ with length less than $p$. On the one hand, if there is such a periodic solution and if this solution is analytic on $\mathbb{R}$, then the delay $\tau$ must be a constant on the whole real line $\mathbb{R}$. On the other hand, under certain technical conditions, it can be ruled out the existence of such a periodic solution with constant delay by considering a cyclic system of ordinary differential equations (see \cite{HWZ} for more details) and hence these technical conditions can ensure the  nonexistence of a nonconstant $p$-periodic solution for which $\tau $ remains to be a constant in a small interval in $\mathbb{R}$.

In this paper, we first note that bounded solutions of system (\ref{eqn-2-oldd}) and system (\ref{eqn-2-old}) and many others including those with ``threshold delay" must satisfy the following differential equations with state-dependent delays
\begin{align}\label{eqn-2}\left\{
 \begin{aligned}
  \dot{x}(t)=&  f(x(t),\,x(t- \tau(t))),\\
   \dot{\tau}(t)=&   g(x(t),\,x(\eta(t)),\,\cdots,\,x(\eta^{M-1}(t)),\,\,\tau(t)),
 \end{aligned}
\right.
\end{align} where 
 $\eta^0(t)=t$, $\eta(t)=t-\tau(t)$,  $\eta^j(t)=\eta(\eta^{j-1}(t))$ for $j=1,\,2\,\,\cdots,\,M$ with $M\in\mathbb{N}$, and we assume 
\begin{description}
\item[(A1)]The maps $f$:
$U\times U\ni
(\theta_1,\theta_2) \rightarrow
f(\theta_1,\theta_2)\in\mathbb{C}^N$ and $g$: 
$U^M\times V \ni
(\gamma_1,\,\gamma_2)\rightarrow
g(\gamma_1,\,\gamma_2)\in\mathbb{C}$ 
are analytic with respect to $(\theta_1,\theta_2)$ and $(\gamma_1,\,\gamma_2)$, respectively, where $U\subset \mathbb{C}^N,\, V\subset\mathbb{C}$ are bounded open sets, $U^M=\underbrace{U\times U\times\cdots\times U}_{M}$.
 \item[(A2)] There exist   $l\in (0,\,1)$ and $c>1$  such that
$  |1-g(\gamma_1,\,\gamma_2)-\frac{c+l}{2}|<\frac{c-l}{2}$ for all
$(\gamma_1,\,\gamma_2)\in \overline{U}^M \times \overline{V}$, where $\overline{U}^M \times \overline{V}$ is the closure of $U^M\times V$.
\end{description}
(A1) is a natural assumption on the analyticity of  $f$ and $g$ on their domains.  (A2) is assuming that $g$ satisfies $l<|1-g|<c$ which ensures that  the mapping $\mathbb{R}\ni t\rightarrow t-\tau(t)\in\mathbb{R}$ is increasing with a bounded rate.

Let $(x,\,\tau)\in C(\mathbb{R};\mathbb{R}^{N+1})$ be a bounded solution  of  system~(\ref{eqn-2}) and define the sequence $\left((y_1,\,z_1),\,(y_2,\,z_2,),\,\cdots\right)$ by 
\begin{align*}(y_j(t),\, z_j(t))=\left(\frac{1}{c^{j}}x(\eta^{j-1}(t)),\,\frac{1}{c^{j}}\tau(\eta^{j-1}(t))\right)
\mbox{ for $j\geq 1,\,j\in\mathbb{N},\,t\in\mathbb{R}$.}  
\end{align*} The reason that we carry a term $\frac{1}{c^{j}}$ will be clear by the end of this section.
For $j=1$  we have for every $t\in\mathbb{R}$,
\begin{align}
 \frac{d}{dt}y_1(t)& = \dot{x}(t) =\frac{1}{c}{f(cy_1(t),\, c^2y_{2}(t)  )},\label{analyticity-ODE-1-J} \\  
\frac{d}{dt}z_1(t)& =\dot{\tau}(t) =\frac{1}{c}g(cy_1(t),\,c^2y_2(t),\,\cdots,\,c^{j+M-1}y_{j+M-1}(t),\, cz_{1}(t)  ).\label{analyticity-ODE-2-J}
\end{align}
For $j\geq 2,\,j\in\mathbb{N}$,  we have for every $t\in\mathbb{R}$,
\begin{align}
 \frac{d}{dt}y_j(t)& =\frac{1}{c^j}\dot{x}(\eta^{j-1}(t))\prod_{i=0}^{j-2}\dot{\eta}(\eta^i(t))\notag\\
                   & =\dot{x}(\eta^{j-1}(t))\frac{1}{c^j} \prod_{i=0}^{j-2}(1-g(x(\eta^i(t)), \,x(\eta^{i+1}(t)),\,\cdots,\,x(\eta^{i+M-1}(t)),\,\tau(\eta^i(t))  )\notag\\
                   & =\frac{f((c^{j}y_j(t),\, {c^{j+1}}y_{j+1}(t)  )}{1-g(c^{j}y_j(t), \,c^{j+1}y_{j+1}(t),\,\cdots,\,c^{j+M-1}y_{j+M-1}(t),\,c^{j}z_j(t)) }\notag\\
                   & \quad \times\frac{1}{c^j} \prod_{i=0}^{j-1}(1-g(c^{i+1}y_{i+1}(t), \,c^{i+2}y_{i+2}(t),\,\cdots,\,c^{i+M}y_{i+M}(t),\,c^{i+1}z_{i+1}(t))),\label{analyticity-ODE-1}
\intertext{and}
\frac{d}{dt}z_j(t)& =\frac{1}{c^j}\dot{\tau}(\eta^{j-1}(t))\prod_{i=0}^{j-2}\dot{\eta}(\eta^i(t))\notag\\
                   & =\dot{\tau}(\eta^{j-1}(t))\frac{1}{c^j} \prod_{i=0}^{j-2}(1-g(x(\eta^i(t)), \,x(\eta^{i+1}(t)),\,\cdots,\,x(\eta^{i+M-1}(t)),\,\tau(\eta^i(t))  )\notag\\
                   & =\frac{g(c^{j}y_j(t), \,c^{j+1}y_{j+1}(t),\,\cdots,\,c^{j+M-1}y_{j+M-1}(t),\,c^{j}z_j(t))}{1-g(c^{j}y_j(t), \,c^{j+1}y_{j+1}(t),\,\cdots,\,c^{j+M-1}y_{j+M-1}(t),\,c^{j}z_j(t))}\notag\\
                   & \quad \times\frac{1}{c^j} \prod_{i=0}^{j-1}(1-g(c^{i+1}y_{i+1}(t), \,c^{i+2}y_{i+2}(t),\,\cdots,\,c^{i+M}y_{i+M}(t),\,c^{i+1}z_{i+1}(t))).\label{analyticity-ODE-2}
\end{align}
Then the sequence $((y_1,\,z_1),\,(y_2,\,z_2),\,\cdots)$, $t\in\mathbb{R}$ satisfies a system of ordinary differential equations of  (\ref{analyticity-ODE-1-J}), (\ref{analyticity-ODE-2-J}), (\ref{analyticity-ODE-1}) and  (\ref{analyticity-ODE-2}).     Namely, for every $t\in\mathbb{R}$ and for $j\geq 1$,  we have
\begin{align} \label{New-Eqn-1}
\left\{
\begin{aligned}
\frac{d}{dt}y_j(t) & =\frac{f((c^{j}y_j(t),\, {c^{j+1}}y_{j+1}(t)  )}{1-g(c^{j}y_j(t), \,c^{j+1}y_{j+1}(t),\,\cdots,\,c^{j+M-1}y_{j+M-1}(t),\,c^{j}z_j(t)) } \\
                  & \quad \times\frac{1}{c^j} \prod_{i=0}^{j-1}(1-g(c^{i+1}y_{i+1}(t), \,c^{i+2}y_{i+2}(t),\,\cdots,\,c^{i+M}y_{i+M}(t),\,c^{i+1}z_{i+1}(t))),\\
\frac{d}{dt}z_j(t)  & =\frac{g(c^{j}y_j(t), \,c^{j+1}y_{j+1}(t),\,\cdots,\,c^{j+M-1}y_{j+M-1}(t),\,c^{j}z_j(t))}{1-g(c^{j}y_j(t), \,c^{j+1}y_{j+1}(t),\,\cdots,\,c^{j+M-1}y_{j+M-1}(t),\,c^{j}z_j(t))} \\
                    & \quad \times\frac{1}{c^j} \prod_{i=0}^{j-1}(1-g(c^{i+1}y_{i+1}(t), \,c^{i+2}y_{i+2}(t),\,\cdots,\,c^{i+M}y_{i+M}(t),\,c^{i+1}z_{i+1}(t))). 
 \end{aligned}\right. 
\end{align}
It remains to decide an appropriate space where $(y_1(t),\,z_1(t),\,(y_2,\,z_2),\,\cdots)$, $t\in\mathbb{R}$ lives in. With the arguments of $f$ and $g$ in the right hand side of (\ref{New-Eqn-1}), it turns out that we can set $w(t)=((y_1(t),\,z_1(t)),\,(y_2(t),\,z_2(t)),\,\cdots)$ to be in the sequence space $l_c^{\infty} (\mathbb{R}^{N+1})$  defined by
\begin{align}\label{l-c-def}
l_c^{\infty} (\mathbb{R}^{N+1})=\{v=(v_1,\,v_2,\,\cdots,v_j,\cdots)\in l^{\infty}(\mathbb{R}^{N+1}): \sup_{j\in\mathbb{N}} c^j|v_j|<+\infty\}, 
\end{align} where we can find a subset such that the terms of $f$ and $g$ in system~(\ref{New-Eqn-1}) are well-defined. Besides, the product terms
 in system~(\ref{New-Eqn-1}) need to be treated so that the right hand side of system~(\ref{New-Eqn-1}) always remains bounded as $j\rightarrow\infty$. We address this issue at Lemma~\ref{Lemma-l-infty}.
 
 With the above preparations we can represent   system~(\ref{New-Eqn-1})   by the following abstract ordinary differential equation:
\begin{align}\label{Abs-ODE-S-infty-new}
 \frac{d}{dt}w(t)= H(Tw(t)), 
\end{align}
where 
 the mapping $T:  l_c^{\infty} (\mathbb{R}^{N+1})\rightarrow l^{\infty}(\mathbb{R}^{N+1})$ 
is defined by
\[
 T(v_1,\,v_2,\,\cdots,\,v_j,\,\cdots)=(cv_1,\,c^2v_2,\,\cdots,c^{j}v_j,\cdots),
\] and $H:  l^{\infty} (\mathbb{R}^{N+1})\rightarrow l^{\infty}(\mathbb{R}^{N+1})$  is defined by the right hand side of   system~(\ref{New-Eqn-1}).

To obtain the analyticity of bounded solutions $(x(t),\,\tau(t))$, $t\in\mathbb{R}$ of  system~(\ref{eqn-2}), we follow the idea of \cite{Nussbaum-analyticity} to show that the solution $w(t)$ to  system~(\ref{Abs-ODE-S-infty-new}) has a complex extension and hence $(x(t),\,\tau(t))$, $t\in\mathbb{C}$ satisfies  system~(\ref{eqn-2}) on the complex domain. We remark that there are significant new challenges not present in \cite{Nussbaum-analyticity} but in this paper. First,  the operator $T$ is not a self mapping on $l_c^{\infty} (\mathbb{C}^{N+1})$ and the range of $H$ is in $l^{\infty} (\mathbb{R}^{N+1})$. This means that the right hand side of system~(\ref{Abs-ODE-S-infty-new}) does not define a vector field on $l_c^{\infty} (\mathbb{C}^{N+1})$ while we are looking for solutions in $l_c^{\infty} (\mathbb{C}^{N+1})$; Secondly, when we  transform system~(\ref{Abs-ODE-S-infty-new}) into an integral form and consider the associated fixed point problem on $l^{\infty} (\mathbb{C}^{N+1})$ using the uniform contraction principle in Banach spaces, we can not obtain a contractive mapping  on $l^{\infty} (\mathbb{C}^{N+1})$ unless we introduce a small perturbation. The problem is then reduced to show that the solution of the initial value problem associated with system~(\ref{Abs-ODE-S-infty-new}) is the limit of that of the perturbed system.

We  organize the remaining part of the paper as follows:  in section~\ref{preliminary}, we will develop results on analyticity of $H$ in the right hand side of  system~(\ref{Abs-ODE-S-infty-new}) and some basic functional analysis necessary for proving the existence of  complex extension of solutions to  system~(\ref{Abs-ODE-S-infty-new}), using the the uniform contraction principle in Banach spaces; 
We  present the main results in section~\ref{Main-results}
and will illustrate this general result with an example in the last section.

\section{Notations and Preliminary Results}\label{preliminary}

Let $E$ be a complex Banach space, $D$ an open subset of the complex plane $\mathbb{C}$. A continuous mapping $u: D\ni t\rightarrow u(t)\in E $ is called analytic if for every $t\in D$, 
$
 \lim_{t\rightarrow t_0} \frac{u(t)-u(t_0)}{t-t_0}=u'(t_0)
$ exists. If $W$ is an open subset of $E$, $\tilde{E}$ is a complex Banach space, a continuous mapping $G: W \ni u\rightarrow G(u)\in \tilde{E}$ is called analytic if for all $u_0\in W$, and for all $h\in E$, the mapping $t\rightarrow G(u_0+th)$ is analytic in the neighbourhood of $0\in \mathbb{C}$. 

Let $\mathbb{K}$ stand for the space of real numbers ($\mathbb{R}$) or complex numbers ($\mathbb{C}$). In the following, we develop some basic properties of the map $T$ and the spaces $l_c^{\infty} (\mathbb{K}^{N+1})$ and $l^{\infty} (\mathbb{K}^{N+1})$. We denote by $(v_j)_{j=1}^\infty$ the element $(v_1,\,v_2,\,\cdots,v_j,\cdots) $ in the sequence spaces.

\begin{lemma}\label{Banach-S-space}
 Let $c>1$ be a constant and $l_c^{\infty} (\mathbb{K}^{N+1})$  be defined by 
\[l_c^{\infty} (\mathbb{K}^{N+1})=\{v=(v_j)_{j=1}^\infty\in l^{\infty}(\mathbb{K}^{N+1}): \sup_{j\in\mathbb{N}} c^j|v_j|<+\infty\}. \] Then $l_c^{\infty} (\mathbb{K}^{N+1})$ is a Banach space   under the norm \mbox{$\|\cdot\|_{l_c^{\infty} (\mathbb{K}^{N+1})}$} defined by
\[
 \|v\|_{l_c^{\infty} (\mathbb{K}^{N+1})}=\sup_{j\in\mathbb{N}} c^j|v_j|.
\]  \end{lemma}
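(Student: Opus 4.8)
The plan is to verify directly that $l_c^{\infty}(\mathbb{K}^{N+1})$ is a normed vector space and then that it is complete, reducing everything to known properties of the ambient space $l^{\infty}(\mathbb{K}^{N+1})$. First I would check that $l_c^{\infty}(\mathbb{K}^{N+1})$ is a linear subspace of $l^{\infty}(\mathbb{K}^{N+1})$: for $v,w\in l_c^{\infty}(\mathbb{K}^{N+1})$ and scalars $\alpha,\beta$, the triangle inequality gives $c^j|\alpha v_j+\beta w_j|\le |\alpha|\,c^j|v_j|+|\beta|\,c^j|w_j|$, whose supremum over $j$ is finite. Then I would confirm that $\|\cdot\|_{l_c^{\infty}(\mathbb{K}^{N+1})}$ is genuinely a norm, i.e. positive definiteness (using $c>0$ so that $c^j|v_j|=0$ for all $j$ forces $v=0$), absolute homogeneity, and the triangle inequality, all of which follow termwise from the corresponding properties of the modulus on $\mathbb{K}^{N+1}$ together with the elementary fact that $\sup_j(a_j+b_j)\le \sup_j a_j+\sup_j b_j$.

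The substantive step is completeness. My plan is to take a Cauchy sequence $(v^{(n)})_{n=1}^\infty$ in $l_c^{\infty}(\mathbb{K}^{N+1})$ and produce a limit inside the space. The key observation is that the weighting operator is just a change of variables: the map $\Phi\colon l_c^{\infty}(\mathbb{K}^{N+1})\to l^{\infty}(\mathbb{K}^{N+1})$ sending $v=(v_j)_{j=1}^\infty$ to $(c^j v_j)_{j=1}^\infty$ is a linear bijection that is isometric, since by definition $\|\Phi v\|_{l^{\infty}(\mathbb{K}^{N+1})}=\sup_j c^j|v_j|=\|v\|_{l_c^{\infty}(\mathbb{K}^{N+1})}$. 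Because $l^{\infty}(\mathbb{K}^{N+1})$ is a Banach space (a standard fact I would cite or invoke), and any normed space that is isometrically isomorphic to a complete space is itself complete, completeness of $l_c^{\infty}(\mathbb{K}^{N+1})$ follows immediately. Concretely, $(\Phi v^{(n)})_n$ is Cauchy in $l^{\infty}(\mathbb{K}^{N+1})$, hence converges to some $u=(u_j)_{j=1}^\infty\in l^{\infty}(\mathbb{K}^{N+1})$, and then $v:=\Phi^{-1}u=(c^{-j}u_j)_{j=1}^\infty$ lies in $l_c^{\infty}(\mathbb{K}^{N+1})$ with $\|v^{(n)}-v\|_{l_c^{\infty}(\mathbb{K}^{N+1})}=\|\Phi v^{(n)}-u\|_{l^{\infty}(\mathbb{K}^{N+1})}\to 0$.

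If one prefers a self-contained argument not invoking completeness of $l^{\infty}$ as a black box, I would instead argue by hand: from the Cauchy condition, for each fixed coordinate $j$ the scalars $(v^{(n)}_j)_n$ form a Cauchy sequence in $\mathbb{K}^{N+1}$ (since $c^j|v^{(n)}_j-v^{(m)}_j|\le \|v^{(n)}-v^{(m)}\|_{l_c^{\infty}(\mathbb{K}^{N+1})}$), so they converge to some $v_j$ by completeness of $\mathbb{K}^{N+1}$; I would then let $m\to\infty$ in the inequality $\sup_j c^j|v^{(n)}_j-v^{(m)}_j|\le\varepsilon$ to obtain $\sup_j c^j|v^{(n)}_j-v_j|\le\varepsilon$ for $n$ large, which simultaneously shows $v\in l_c^{\infty}(\mathbb{K}^{N+1})$ (via $v=v^{(n)}+(v-v^{(n)})$) and that $v^{(n)}\to v$ in norm. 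There is no real obstacle here; the only point requiring mild care is the standard interchange of the supremum over coordinates with the limit $m\to\infty$, which is justified because the bound holds uniformly in $j$. I expect the isometry viewpoint to be the cleanest route, so I would present that as the main argument.
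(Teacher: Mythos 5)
Your proof is correct. The paper states Lemma~\ref{Banach-S-space} itself without proof, but its proof of the analogous weighted-space result, Lemma~\ref{l-m-space}, is precisely your argument in concrete form: a Cauchy sequence is transported by the weighting map to a Cauchy sequence in $l^{\infty}(\mathbb{K}^{N+1})$, the limit there is pulled back by dividing out the weights, and convergence in the weighted norm follows — i.e., your isometry $\Phi$.
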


%
%
%

\begin{lemma}\label{l-m-space}
 Let $m\in\mathbb{N}, m\geq 2$ be a constant and $l_m^{\infty} (\mathbb{K}^{N+1})$  be defined by 
\[l_m^{\infty} (\mathbb{K}^{N+1})=\{v=(v_j)_{j=1}^\infty\in l^{\infty}(\mathbb{K}^{N+1}): \sup_{j\in\mathbb{N}} j^m|v_j|<+\infty\}. \] Then $l_m^{\infty} (\mathbb{K}^{N+1})$ is a Banach space   under the norm \mbox{$\|\cdot\|_{l_m^{\infty} (\mathbb{K}^{N+1})}$} defined by
\[
 \|v\|_{l_m^{\infty} (\mathbb{K}^{N+1})}=\sup_{j\in\mathbb{N}} j^m|v_j|.
\] 
Moreover, the embedding $I_m: l_m^{\infty} (\mathbb{K}^{N+1})\rightarrow l^{\infty} (\mathbb{K}^{N+1})$ is compact.
 \end{lemma}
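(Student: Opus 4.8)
The plan is to treat the two assertions separately: completeness via a weight-rescaling isometry, and compactness of the embedding via finite-rank approximation.

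For the Banach space claim I would observe that the weight map $\Phi : l_m^{\infty}(\mathbb{K}^{N+1})\to l^{\infty}(\mathbb{K}^{N+1})$, $\Phi v=(j^m v_j)_{j=1}^\infty$, is a linear bijection onto $l^{\infty}(\mathbb{K}^{N+1})$ with inverse $\Phi^{-1}w=(j^{-m}w_j)_{j=1}^\infty$; here surjectivity uses $j^m\geq 1$, which guarantees $\Phi^{-1}w$ again lies in $l^{\infty}$. By the very definition of the norm, $\|\Phi v\|_{l^{\infty}(\mathbb{K}^{N+1})}=\sup_{j\in\mathbb{N}} j^m|v_j|=\|v\|_{l_m^{\infty}(\mathbb{K}^{N+1})}$, so $\Phi$ is an isometric isomorphism. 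The norm axioms for $\|\cdot\|_{l_m^{\infty}(\mathbb{K}^{N+1})}$ then transfer from those of $l^{\infty}(\mathbb{K}^{N+1})$, and completeness follows because $l^{\infty}(\mathbb{K}^{N+1})$ is complete and isometries preserve Cauchy sequences and their limits. This step is entirely parallel to Lemma~\ref{Banach-S-space}, with the weight $c^j$ replaced by $j^m$.

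For the compactness of $I_m$, the strategy is to exhibit it as the operator-norm limit of finite-rank operators. For $n\in\mathbb{N}$ define the truncation $P_n : l^{\infty}(\mathbb{K}^{N+1})\to l^{\infty}(\mathbb{K}^{N+1})$ by $(P_n v)_j=v_j$ for $j\leq n$ and $(P_n v)_j=0$ for $j>n$. The composite $P_n I_m$ has range contained in the finite-dimensional subspace of sequences supported on $\{1,\dots,n\}$ and is bounded, since $\|P_n I_m v\|_{l^{\infty}(\mathbb{K}^{N+1})}=\sup_{j\leq n}|v_j|\leq\|v\|_{l_m^{\infty}(\mathbb{K}^{N+1})}$; hence $P_n I_m$ is compact. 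The essential estimate is the uniform tail bound: for $v$ in the unit ball of $l_m^{\infty}(\mathbb{K}^{N+1})$ we have $j^m|v_j|\leq 1$ for every $j$, so
\[
\|(I_m-P_n I_m)v\|_{l^{\infty}(\mathbb{K}^{N+1})}=\sup_{j>n}|v_j|\leq\sup_{j>n}\frac{1}{j^m}=\frac{1}{(n+1)^m}.
\]
Taking the supremum over the unit ball yields $\|I_m-P_n I_m\|\leq (n+1)^{-m}\to 0$ as $n\to\infty$. Thus $I_m$ is a uniform limit of compact operators and is therefore itself compact.

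The main obstacle, and the only place where the weight plays a role, is this tail estimate, which forces the sequences in the unit ball of $l_m^{\infty}(\mathbb{K}^{N+1})$ to decay uniformly; once this uniform decay is in hand, the finite-rank approximation is routine and the closedness of the compact operators under the operator norm is a standard fact. One could alternatively argue directly by a diagonal subsequence argument—any bounded sequence in $l_m^{\infty}(\mathbb{K}^{N+1})$ has a subsequence converging coordinatewise, and the uniform tail decay upgrades this to convergence in $l^{\infty}(\mathbb{K}^{N+1})$, which is the Fr\'echet--Kolmogorov criterion in disguise—but I prefer the finite-rank formulation as it is shorter and self-contained. Note that $m\geq 2$ is far more than needed here; any $m>0$ makes the tail bound go to zero.
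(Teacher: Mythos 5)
Your proposal is correct and follows essentially the same route as the paper: completeness by transferring Cauchy sequences through the weight rescaling $v\mapsto (j^m v_j)$ (your isometric isomorphism $\Phi$ is exactly the paper's correspondence $v^n\mapsto b^n$), and compactness by approximating $I_m$ in operator norm with finite-rank truncations. In fact you make explicit the one estimate the paper leaves implicit, namely that on the unit ball of $l_m^{\infty}(\mathbb{K}^{N+1})$ the tail satisfies $\sup_{j>n}|v_j|\leq (n+1)^{-m}$, which is what actually drives $\|I_m-H_k\|\to 0$.
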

\begin{proof}
 It is clear that $l_m^{\infty} (\mathbb{K}^{N+1})$ is a subspace of $l^{\infty}(\mathbb{K}^{N+1})$ and  $\|v\|_{l_m^{\infty} (\mathbb{K}^{N+1})}=\sup_{j\in\mathbb{N}} j^m|v_j|$ defines a norm on $l_m^{\infty} (\mathbb{K}^{N+1})$. Let $\{v^n\}_{n=1}^{\infty}$  be a Cauchy sequence in $l_m^{\infty} (\mathbb{K}^{N+1})$. For every $n\in\mathbb{N}$, let  $b^n=(v_1^{n},\,2^mv_2^{n},\,\cdots,j^mv_j^{n}, \cdots)$. Then   $\{b^n\}_{n=1}^{\infty}$  is a Cauchy sequence in $l^{\infty}(\mathbb{K}^{N+1})$. Since $l^{\infty}(\mathbb{K}^{N+1})$ is a Banach space, there exists $b^*\in l^{\infty}(\mathbb{K}^{N+1})$ so that
\[
 \lim_{n\rightarrow+\infty}|b^n-b^*|_{l^{\infty}(\mathbb{K}^{N+1})}=0.
\]
Then we have $v^*=(\frac{b_1^{*}}{1},\,\frac{b_2^{*}}{2^m},\,\cdots,\frac{b_j^{*}}{j^m}\cdots)\in l_m^{\infty} (\mathbb{K}^{N+1})$ and
\[
  \lim_{n\rightarrow+\infty}|v^n-v^*|_{l_m^{\infty} (\mathbb{K}^{N+1})}=\lim_{n\rightarrow+\infty}|b^n-b^*|_{l^{\infty}(\mathbb{K}^{N+1})}=0.\]
Next we show that the embedding $I_m: l_m^{\infty} (\mathbb{K}^{N+1})\rightarrow l^{\infty} (\mathbb{K}^{N+1})$ is compact.
  For every $k\in\mathbb{N}$ we define the ``cut-off'' operator $H_k:  l_m^{\infty} (\mathbb{K}^{N+1})\rightarrow l^{\infty} (\mathbb{K}^{N+1})$ 
  by
\[
 H_k(v_1,\,v_2,\,\cdots,\,v_k,\,\cdots)=(v_1,\,v_2,\,\cdots, v_k,\,0,\cdots).
\] Then $H_k$ is compact since the dimension of the range is finite. Moreover we have
\[
\|(I_m-H_k)(v_1,\,v_2,\,\cdots,\,v_j,\,\cdots)\|_{_{l^{\infty} (\mathbb{K}^{N+1})}}=\sup_{j\geq k+1}|v_j|,
\]
which implies that $\|I_m-H_k\|\rightarrow 0$ as $k\rightarrow +\infty$ and hence $I_m$ is compact.
\hfill\hspace*{1em}\qed\end{proof}

 \begin{lemma}\label{Banach-spaces}Let $c>1$ be a constant. The closed unit ball of
  $l_c^{\infty} (\mathbb{K}^{N+1})$ is closed under the norm $\|\cdot\|_{l^{\infty}(\mathbb{K}^{N+1})}$.  
\end{lemma}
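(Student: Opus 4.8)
The plan is to first characterize the closed unit ball explicitly and then exploit the fact that convergence in the weaker $l^{\infty}$-norm already forces coordinatewise convergence. Writing $B$ for the closed unit ball of $l_c^{\infty}(\mathbb{K}^{N+1})$, the condition $\|v\|_{l_c^{\infty}(\mathbb{K}^{N+1})} = \sup_{j} c^j|v_j| \leq 1$ is equivalent to the family of pointwise inequalities $c^j|v_j|\leq 1$, i.e.
\[
 B = \{v=(v_j)_{j=1}^\infty : |v_j|\leq c^{-j}\text{ for every } j\in\mathbb{N}\}.
\]
This reformulation is the crux: the defining constraints are separate inequalities, one for each coordinate, rather than a single constraint coupling all coordinates.

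Next I would take an arbitrary sequence $\{v^n\}_{n=1}^\infty\subset B$ converging to some $v^*$ in the $l^{\infty}(\mathbb{K}^{N+1})$-norm and show $v^*\in B$. For each fixed $j$ one has the elementary bound $|v_j^n-v_j^*|\leq \|v^n-v^*\|_{l^{\infty}(\mathbb{K}^{N+1})}$, and since the right-hand side tends to $0$, I obtain $v_j^n\to v_j^*$ as $n\to\infty$ for every fixed $j$; that is, $l^\infty$-convergence delivers coordinatewise convergence for free. Because $v^n\in B$ gives $|v_j^n|\leq c^{-j}$ for all $n$, passing to the limit in $n$ yields $|v_j^*|\leq c^{-j}$ for each $j$. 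Hence $\sup_j c^j|v_j^*|\leq 1$, so $v^*\in l_c^{\infty}(\mathbb{K}^{N+1})$ with $\|v^*\|_{l_c^{\infty}(\mathbb{K}^{N+1})}\leq 1$, i.e. $v^*\in B$, and therefore $B$ is closed under $\|\cdot\|_{l^{\infty}(\mathbb{K}^{N+1})}$.

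There is no substantial obstacle here; the only point worth flagging is conceptual rather than technical. One might worry that closedness in the coarser $l^\infty$-topology could fail because the $l_c^\infty$-structure is genuinely stronger. What rescues the argument is precisely that the unit-ball constraints are decoupled pointwise inequalities, which are stable under the coordinatewise limits that $l^\infty$-convergence automatically provides. In other words, no uniform-in-$j$ control beyond coordinatewise passage to the limit is needed, and the weaker topology suffices.
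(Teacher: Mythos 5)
Your proof is correct, but it follows a genuinely different and more economical route than the paper's. The paper argues by contradiction: given a Cauchy sequence $\{v^n\}\subset B_c(1)$ in the $\|\cdot\|_{l^{\infty}(\mathbb{K}^{N+1})}$ norm with limit $v^0$, it splits into two cases --- either $v^0\notin l_c^{\infty}(\mathbb{K}^{N+1})$, or $v^0\in l_c^{\infty}(\mathbb{K}^{N+1})$ with $\|v^0\|_{l_c^{\infty}(\mathbb{K}^{N+1})}>1$ --- and in each case selects a witnessing coordinate $j_0$ (resp.\ $j_1$) and a specially tuned $\epsilon$ to contradict either $\{v^n\}\subset B_c(1)$ or the convergence. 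Your argument instead rewrites the single coupled condition $\sup_{j} c^j|v_j|\le 1$ as the decoupled family of closed constraints $|v_j|\le c^{-j}$, one per coordinate, notes that convergence in $\|\cdot\|_{l^{\infty}(\mathbb{K}^{N+1})}$ dominates coordinatewise convergence, and passes to the limit in each inequality separately; membership of the limit in $l_c^{\infty}(\mathbb{K}^{N+1})$ and the bound $\|v^*\|_{l_c^{\infty}(\mathbb{K}^{N+1})}\le 1$ then drop out simultaneously, with no case distinction at all. What your approach buys is both brevity and robustness: it isolates the structural reason the lemma holds (the ball is an intersection of coordinatewise-closed conditions, hence closed even in the topology of coordinatewise convergence, which is coarser than the $l^{\infty}$ topology), and it sidesteps a delicate point in the paper's Case 2, where the displayed identity $s/c^{j_1}=|(v^0)_{j_1}|$ tacitly assumes that the supremum defining $s=\|v^0\|_{l_c^{\infty}(\mathbb{K}^{N+1})}$ is attained at the index $j_1$, something the mere existence of a coordinate with $c^{j_1}|(v^0)_{j_1}|>1$ does not guarantee. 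Your argument also generalizes verbatim to any positive weight sequence in place of $c^j$.
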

\begin{proof} Let $B_c(1)=\{v\in l_c^{\infty}(\mathbb{K}^{N+1}): \|v\|_{l_c^{\infty}(\mathbb{K}^{N+1})}\leq 1\}.$  Let $\{v^n\}_{n=1}^{+\infty}\subset B_c(1)$ be a Cauchy sequence in the norm $\|\cdot\|_{l^{\infty}(\mathbb{K}^{N+1})}$. Since  $l_c^{\infty} (\mathbb{K}^{N+1})$ is a subspace of the Banach space $(l^{\infty} (\mathbb{K}^{N+1}),\,|\cdot|_{l^{\infty}(\mathbb{K}^{N+1})})$. There exists $v^0\in l^{\infty} (\mathbb{K}^{N+1})$ such that
\begin{align}\label{Bs-0}
\lim_{n\rightarrow +\infty}\|v^n-v^0\|_{l^{\infty}(\mathbb{K}^{N+1})}=0.                                                                                                                                                                                                                                                                                               \end{align}Now we show that $v^0\in B_c(1)$. By way of contradiction, assume that $v^0\not\in B_c(1)$. Then we distinguish the following two cases:\\
\textit{Case 1}. $v^0\not\in l_c^{\infty}(\mathbb{K}^{N+1})$.
 Then for every $K>0$,  there exists $j_0\in\mathbb{N}$ such that $c^{j_0}|(v^0)_{j_0}|>K$. That is,
\begin{align}\label{Bs-1}
|(v^0)_{j_0}|>\frac{K}{c^{j_0}}.
\end{align}
On the other hand, it follows from
(\ref{Bs-0}) that
for every $\epsilon>0$, there exists $N_0\in\mathbb{N}$ such that for every $n>N_0$, we have
$
  \sup_{j\in\mathbb{N}}|(v^0)_{j}-(v^n)_{j}|<\epsilon 
$
which leads to
$
 |(v^0)_{j}|- |(v^n)_{j}|<\epsilon,\,\mbox{for every $j\in\mathbb{N}$}, n>N_0. 
$
It follows that
\begin{align}\label{Bs-tri-inequality-0}
 |(v^n)_{j}|>  |(v^0)_{j}|-\epsilon,\,\mbox{for every $j\in\mathbb{N}$}, n>N_0. 
\end{align}
Choosing $j=j_0$ and $\epsilon= \frac{K}{2c^{j_0}}$ in (\ref{Bs-tri-inequality-0}), then by (\ref{Bs-0}) and (\ref{Bs-1})  we obtain that
$
  |(v^n)_{j_0} |  \geq   |(v^0)_{j_0}|-\frac{K}{2c^{j_0}}  >\frac{K}{2c^{j_0}},
$ which leads to
$  |c^{j_0}(v^n)_{j_0}|>K/2$ for every $n>N_0$. That is, $\lim_{n\rightarrow+\infty}\|v^n\|_{l_c^{\infty}(\mathbb{K}^{N+1})}=+\infty.$ 
   This is a contradiction since $\{v^n\}_{n=1}^{+\infty}\subset B_c(1)$. \\
\textit{Case 2}. $v^0\in l_c^{\infty}(\mathbb{K}^{N+1})$ but $\|v^0\|_{l_c^{\infty}(\mathbb{K}^{N+1})}>1$. Let $s=\|v^0\|_{l_c^{\infty}(\mathbb{K}^{N+1})}$. Then $s>1$ and there exists $j_1\in\mathbb{N}$ such that $c^{j_1}|(v^0)_{j_1}|>1$. That is,
\begin{align}\label{Bs-2}
\frac{s}{c^{j_1}}=|(v^0)_{j_1}|>\frac{1}{c^{j_1}}.
\end{align}
On the other hand, it follows from
(\ref{Bs-0}) that
for every $\epsilon>0$, there exists $N_1\in\mathbb{N}$ such that for every $n>N_1$, we have
$
  \sup_{j\in\mathbb{N}}|(v^n)_{j}-(v^0)_{j}|<\epsilon 
$
which leads to
$
 |(v^0)_{j}|- |(v^n)_{j}|<\epsilon,\,\mbox{for every $j\in\mathbb{N}$}, n>N_1. 
$
It follows that
\begin{align}\label{Bs-tri-inequality}
 |(v^n)_{j}|>  |(v^0)_{j}|-\epsilon,\,\mbox{for every $j\in\mathbb{N}$}, n>N_0. 
\end{align}
Note that $\{v^n\}_{n=1}^{+\infty}\subset B_c(1))$. Then by (\ref{Bs-tri-inequality}) we have                                                                                                   \begin{align}\label{Bs-tri-inequality-1}
 \frac{1}{c^j}\geq |(v^n)_{j}|>  |(v^0)_{j}|-\epsilon,\,\mbox{for every $j\in\mathbb{N}$}, n>N_0. 
\end{align}
 Choosing $j=j_1$, $\epsilon=\frac{s-1}{2c^{j_1}}$ in  (\ref{Bs-tri-inequality-1}) we obtain from (\ref{Bs-2}) that
   \begin{align}\label{Bs-tri-inequality-2}
 \frac{1}{c^{j_1}}\geq |(v^n)_{j_1}|>  |(v^0)_{j_1}|-\epsilon=\frac{s}{c^{j_1}}-\frac{s-1}{2c^{j_1}},\,\mbox{for every }\, n>N_0. 
\end{align}                                                                                              
Then we have $s<1$. This is a contradiction.
\hfill\hspace*{1em}\qed \end{proof}

We remark that the unit sphere of $l_c^{\infty} (\mathbb{K}^{N+1})$ is not closed under the norm $\|\cdot\|_{l^{\infty}(\mathbb{K}^{N+1})}$.  
In light of Lemma~\ref{Banach-spaces} we will equip  bounded sets of $l_c^{\infty} (\mathbb{K}^{N+1})$  with the norm $\|\cdot\|_{l^{\infty}(\mathbb{K}^{N+1})}$.  The following three lemmas discuss the properties of a linear operator on $l_c^{\infty} (\mathbb{K}^{N+1})$ equipped with the norm $\|\cdot\|_{l^{\infty}(\mathbb{K}^{N+1})}$.

\begin{lemma}\label{OMT}Let $c>1$ be a constant. 
 The mapping $T:  (l_c^{\infty} (\mathbb{K}^{N+1},\,\|\cdot\|_{l^{\infty} (\mathbb{K}^{N+1})})\rightarrow (l^{\infty}(\mathbb{K}^{N+1}),\,\|\cdot\|_{l^{\infty}(\mathbb{K}^{N+1})})$ 
defined by
\[
 T(v_1,\,v_2,\,\cdots,\,v_j,\,\cdots)=(cv_1,\,c^2v_2,\,\cdots,c^{j}v_j,\cdots),
\]
has a compact inverse $T^{-1}$ with norm $\|T^{-1}\|=\frac{1}{c}$. Moreover, $T$ is a closed operator.
\end{lemma}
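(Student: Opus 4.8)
The plan is to treat $T$ as an unbounded but invertible operator and to extract all four assertions from an explicit formula for its inverse. First I would exhibit the inverse directly: given $w=(w_j)_{j=1}^\infty\in l^\infty(\mathbb{K}^{N+1})$, set $Sw=(w_j/c^j)_{j=1}^\infty$. Since $\sup_{j}c^j|w_j/c^j|=\sup_j|w_j|=\|w\|_{l^\infty(\mathbb{K}^{N+1})}<+\infty$, the sequence $Sw$ lies in $l_c^\infty(\mathbb{K}^{N+1})$, so $S$ maps $l^\infty$ into $l_c^\infty$; and a one-line check shows $TSw=w$ and $STv=v$, so $T$ is a bijection of $l_c^\infty$ onto $l^\infty$ with $T^{-1}=S$.

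For the norm I would compute $\|T^{-1}w\|_{l^\infty}=\sup_j|w_j|/c^j$. Because $c>1$, the weights $c^{-j}$ attain their maximum $c^{-1}$ at $j=1$, giving $\|T^{-1}w\|_{l^\infty}\le\|w\|_{l^\infty}/c$, hence $\|T^{-1}\|\le 1/c$; testing on $w=(1,0,0,\cdots)$ produces equality, so $\|T^{-1}\|=1/c$. For compactness I would imitate the cut-off argument of Lemma~\ref{l-m-space}: define the finite-rank operators $R_k w=(w_1/c,\cdots,w_k/c^k,0,\cdots)$, each compact since its range is finite-dimensional, and estimate $\|(T^{-1}-R_k)w\|_{l^\infty}=\sup_{j\ge k+1}|w_j|/c^j\le\|w\|_{l^\infty}/c^{k+1}$. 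Thus $\|T^{-1}-R_k\|\le c^{-(k+1)}\to 0$, exhibiting $T^{-1}$ as a uniform limit of compact operators, so $T^{-1}$ is compact.

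Finally, for closedness I would verify the graph condition directly, since---and this is the point to watch---$(l_c^\infty,\|\cdot\|_{l^\infty})$ is not complete (Lemma~\ref{Banach-spaces} only gives closedness of the ball), so the closed-graph theorem is not available and $T$ is genuinely unbounded for this norm (e.g. $\|Te_n\|_{l^\infty}=c^n$ while $\|e_n\|_{l^\infty}=1$). So suppose $v^n\in l_c^\infty$ with $v^n\to v$ and $Tv^n\to w$ in $\|\cdot\|_{l^\infty}$. Coordinatewise convergence gives $w_j=\lim_n c^j v_j^n=c^j v_j$ for each $j$, and since $w\in l^\infty$ we get $\sup_j c^j|v_j|=\|w\|_{l^\infty}<+\infty$, i.e. $v\in l_c^\infty$ and $Tv=w$; hence $T$ is closed. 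Equivalently, one may note that $T$ is the inverse of the everywhere-defined bounded operator $T^{-1}$, whose graph, and therefore that of $T$, is closed.

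The main obstacle is keeping straight which norm is in force on the domain: the content of the lemma rests entirely on measuring $l_c^\infty$ with the weaker norm $\|\cdot\|_{l^\infty}$, which is exactly what turns the diagonal amplification $T$ into an unbounded closed operator with compact inverse; under its own norm $\|\cdot\|_{l_c^\infty}$ the operator $T$ would instead be an isometry onto $l^\infty$, with no such compactness.
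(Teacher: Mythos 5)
Your proposal is correct and follows essentially the same route as the paper: the explicit formula for $T^{-1}$, the norm computation $\|T^{-1}\|=1/c$, and compactness via finite-rank cut-off approximations are exactly the paper's argument. For closedness the paper uses your parenthetical ``equivalently'' argument (deducing $T^{-1}u=v$ from boundedness of $T^{-1}$) rather than your coordinatewise verification, but both are valid and the difference is cosmetic.
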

\begin{proof}  We first show that $T^{-1}$ exists and is continuous. By definition of $T$ and that $c>1$, we know that $T$ is 1-1 and onto. Therefore $T^{-1}:  (l^{\infty} (\mathbb{K}^{N+1},\,\|\cdot\|_{l^{\infty} (\mathbb{K}^{N+1})})\rightarrow (l_c^{\infty}(\mathbb{K}^{N+1}),\,\|\cdot\|_{l^{\infty}(\mathbb{K}^{N+1})})$  exists and is given by
\[
 T^{-1}(v_1,\,v_2,\,\cdots,\,v_j,\,\cdots)=(c^{-1}v_1,\,c^{-2}v_2,\,\cdots,c^{-j}v_j,\cdots).
\] Moreover, we have
\begin{align*}
 \|T^{-1}\|
     = & \sup_{v\in l^{\infty} (\mathbb{K}^{N+1})}\frac{\|T^{-1}v\|_{l^{\infty}(\mathbb{K}^{N+1})}}{\|v\|_{l^{\infty}(\mathbb{K}^{N+1})}}
     =  \sup_{\|v\|_{l^{\infty}(\mathbb{K}^{N+1})}=1}\|T^{-1}v\|_{l^{\infty}(\mathbb{K}^{N+1})}
    =  \frac{1}{c}.
\end{align*} 
Next we show that $T^{-1}$ is compact.  For every $m\in\mathbb{N}$ we define an operator $H_m:  (l^{\infty} (\mathbb{K}^{N+1},\,\|\cdot\|_{l^{\infty} (\mathbb{K}^{N+1})})\rightarrow (l_c^{\infty}(\mathbb{K}^{N+1}),\,\|\cdot\|_{l^{\infty}(\mathbb{K}^{N+1})})$ 
  by
\[
 H_m(v_1,\,v_2,\,\cdots,\,v_j,\,\cdots)=(c^{-1}v_1,\,c^{-2}v_2,\,\cdots, c^{-m}v_m,\,0,\cdots).
\] Then $H_m$ is compact since the dimension of the range is finite. Moreover we have
\[
\|(T^{-1}-H_m)(v_1,\,v_2,\,\cdots,\,v_j,\,\cdots)\|_{_{l^{\infty} (\mathbb{K}^{N+1})}}=\sup_{j\geq m+1}c^{-j}\|(v_1,\,v_2,\,\cdots,\,v_j,\,\cdots)\|_{_{l^{\infty} (\mathbb{K}^{N+1})}},
\]
which implies that $\|T^{-1}-H_m\|\rightarrow 0$ as $m\rightarrow +\infty$ and hence $T^{-1}$ is compact.

Next we show that $T$ is a closed operator. Let $\{v^n\}_{n=1}^\infty\subset {l_c^{\infty} (\mathbb{K}^{N+1})}$ be a convergent sequence such that $\lim_{n\rightarrow+\infty}\|v^n-v\|_{l^{\infty} (\mathbb{K}^{N+1})}=0$ for some $v\in l^{\infty} (\mathbb{K}^{N+1})$, and such that $\lim_{n\rightarrow+\infty}\|Tv^n-u\|_{l^{\infty} (\mathbb{K}^{N+1})}=0$ for some $u\in l^{\infty} (\mathbb{K}^{N+1})$. Then we have
\begin{align*}
\|T^{-1}u-v\|_{l^{\infty} (\mathbb{K}^{N+1})} & =\|T^{-1}u-v^n+v^n-v\|_{l^{\infty} (\mathbb{K}^{N+1})} \\
& =\|T^{-1}u-v^n\|_{l^{\infty} (\mathbb{K}^{N+1})} +\|v^n-v\|_{l^{\infty} (\mathbb{K}^{N+1})} \\
& \leq \|T^{-1}\|\cdot\|u-Tv^n\|_{l^{\infty} (\mathbb{K}^{N+1})} +\|v^n-v\|_{l^{\infty} (\mathbb{K}^{N+1})} \\
&\rightarrow 0 \mbox{ as } n\rightarrow+\infty.
\end{align*}
Therefore we have $T^{-1}u-v=0$. That is, $Tv=u$. $T$ is closed.
    \hfill\hspace*{0.05em}\qed \end{proof}

Denote by $\mathscr{L}(l^{\infty}(\mathbb{K}^{N+1});\, l^{\infty}(\mathbb{K}^{N+1}))$  the space of bounded linear operators from $l^{\infty}(\mathbb{K}^{N+1})$ to $l^{\infty}(\mathbb{K}^{N+1}))$. We have the following two lemmas which will be used when we deal with the integral forms of the relevant abstract ordinary differential equations.
 
\begin{lemma}\label{Compact-operator}
Let the mapping $T:  l_c^{\infty} (\mathbb{K}^{N+1})\rightarrow l^{\infty}(\mathbb{K}^{N+1})$ 
be as in Lemma~\ref{OMT} and $\lambda\geq 0$. Then   the mappings $I - T^{-1}$ and $\lambda I+ T^{-1}: l^{\infty}(\mathbb{K}^{N+1})\rightarrow l^{\infty}(\mathbb{K}^{N+1})$ are   bounded linear operators with  
\begin{align*}
 \| I - T^{-1}\|_{\mathscr{L}(l^{\infty}(\mathbb{K}^{N+1});\, l^{\infty}(\mathbb{K}^{N+1}))}&=1,\\ 
 \|  \lambda I + T^{-1}\|_{\mathscr{L}(l^{\infty}(\mathbb{K}^{N+1});\, l^{\infty}(\mathbb{K}^{N+1}))}&=\lambda+\frac{1}{c}. 
\end{align*}
Moreover, if $\lambda\in (0,\,1-1/c)$ then \[\| (1-\lambda) I - T^{-1}\|_{\mathscr{L}(l^{\infty}(\mathbb{K}^{N+1});\, l^{\infty}(\mathbb{K}^{N+1}))}=1-\lambda. \]
\end{lemma}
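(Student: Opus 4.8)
The plan is to observe that each of the three operators in question is a block-diagonal scalar-multiplication operator on $l^{\infty}(\mathbb{K}^{N+1})$, and to compute its norm through the elementary formula for the norm of such an operator. Using the explicit form $T^{-1}(v_j)_{j=1}^\infty=(c^{-j}v_j)_{j=1}^\infty$ established in Lemma~\ref{OMT}, I would write $(I-T^{-1})(v_j)_{j=1}^\infty=((1-c^{-j})v_j)_{j=1}^\infty$, $(\lambda I+T^{-1})(v_j)_{j=1}^\infty=((\lambda+c^{-j})v_j)_{j=1}^\infty$, and $((1-\lambda)I-T^{-1})(v_j)_{j=1}^\infty=(((1-\lambda)-c^{-j})v_j)_{j=1}^\infty$. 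Each has the form $D_d(v_j)_{j=1}^\infty=(d_jv_j)_{j=1}^\infty$ for a bounded scalar sequence $(d_j)_{j=1}^\infty$, so linearity and boundedness are immediate from the linearity and boundedness of $I$ and of $T^{-1}$ (the latter by Lemma~\ref{OMT}).

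The one general fact I would record first is that for any bounded scalar sequence $(d_j)_{j=1}^\infty$ the diagonal operator $D_d$ on $l^{\infty}(\mathbb{K}^{N+1})$ satisfies $\|D_d\|=\sup_{j\in\mathbb{N}}|d_j|$. The upper bound follows from $\|D_dv\|_{l^{\infty}(\mathbb{K}^{N+1})}=\sup_{j}|d_j|\,|v_j|\leq(\sup_j|d_j|)\,\|v\|_{l^{\infty}(\mathbb{K}^{N+1})}$, and the lower bound follows by testing on the sequence that places a fixed unit vector $u\in\mathbb{K}^{N+1}$ in the $j$-th slot and $0$ elsewhere: this sequence has $l^{\infty}$-norm $1$ and is mapped to one of $l^{\infty}$-norm $|d_j|$, so $\|D_d\|\geq|d_j|$ for every $j$.

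With this formula in hand, each norm reduces to a supremum of an explicit scalar sequence. For $I-T^{-1}$ the entries $1-c^{-j}$ lie in $(0,1)$ and increase to $1$ as $j\to\infty$, so the supremum is $1$. For $\lambda I+T^{-1}$ the entries $\lambda+c^{-j}$ are positive and strictly decreasing in $j$, so the supremum is attained at $j=1$ and equals $\lambda+\tfrac{1}{c}$. For the final claim, the hypothesis $\lambda\in(0,\,1-1/c)$ is precisely what forces the smallest entry, occurring at $j=1$, namely $(1-\lambda)-\tfrac{1}{c}$, to remain strictly positive; the entries $(1-\lambda)-c^{-j}$ are then all positive and increase to $1-\lambda$, so the supremum of their absolute values is $1-\lambda$.

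There is no substantial obstacle here; the argument is routine once the diagonal structure is exposed. The only points requiring care are that for $I-T^{-1}$ and for $(1-\lambda)I-T^{-1}$ the supremum is approached but not attained, which does not affect the value of the operator norm, and that the hypothesis $\lambda<1-1/c$ must be invoked in the last case so that every diagonal entry is nonnegative and the absolute values can be dropped before taking the supremum; without this sign control one would instead have to compare the positive tail $1-\lambda$ against the magnitude of the $j=1$ entry, and the stated value would no longer be correct.
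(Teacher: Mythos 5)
Your proof is correct, and it establishes the same three equalities by the same basic mechanism as the paper (an elementary upper bound on the supremum plus test elements realizing it), but its organization is genuinely different. The paper runs three separate computations over the unit sphere $S(1)$: for each operator it gets the upper bound by an ad hoc sup/inf manipulation and then exhibits a single full-support test sequence --- $\{\tfrac{j}{j+1}\vec{e}\,\}_{j=1}^{\infty}$ for $I-T^{-1}$ and for $(1-\lambda)I-T^{-1}$, and $\{c^{-(j-1)}\vec{e}\,\}_{j=1}^{\infty}$ for $\lambda I+T^{-1}$ --- whose image realizes the claimed value in the supremum over $j$. You instead isolate one reusable fact, that a diagonal operator $D_d$ on $l^{\infty}(\mathbb{K}^{N+1})$ satisfies $\|D_d\|=\sup_{j\in\mathbb{N}}|d_j|$, proved with the family of one-slot test vectors, after which each of the three norms collapses to the supremum of an explicit scalar sequence. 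Your route buys modularity and robustness: the one-slot vectors are simpler than the paper's engineered sequences (whose bookkeeping the paper in fact garbles in its third computation, invoking $v_0'$ but computing with the entries $\tfrac{j}{j+1}$ of $v_0$), and the same diagonal formula instantly yields related statements such as Lemma~\ref{Extension-operator}.

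One caveat on your closing remark: it overstates matters to say that without $\lambda<1-1/c$ the stated value ``would no longer be correct.'' Since $j\mapsto(1-\lambda)-c^{-j}$ is increasing, for any $\lambda<1$ one has $\|(1-\lambda)I-T^{-1}\|=\max\bigl\{1-\lambda,\,\bigl|(1-\lambda)-c^{-1}\bigr|\bigr\}$, so the value $1-\lambda$ actually persists for all $\lambda\le 1-\tfrac{1}{2c}$; the hypothesis $\lambda\in(0,\,1-1/c)$ is sufficient but not necessary. This concerns only parameters outside the lemma's hypothesis and does not affect the validity of your proof.
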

\begin{proof}
Let $S(1)=\{v\in l^{\infty}(\mathbb{K}^{N+1}):\sup_{j\in\mathbb{N}}|v_j|=1\}\subset l^{\infty}(\mathbb{K}^{N+1}).$ Note that  
\begin{align*}
 \|  I - T^{-1}\|_{\mathscr{L}(l^{\infty}(\mathbb{K}^{N+1});\, l^{\infty}(\mathbb{K}^{N+1}))}
=& \sup_{v\in S(1)} \sup_{j\in\mathbb{N}}(1-c^{-j})|v_j|\\
\leq &  \sup_{v\in S(1)} \left(\sup_{j\in\mathbb{N}}|v_j|-\inf_{j\in\mathbb{N}} c^{-j} |v_j|\right)\\
=& 1.
\end{align*}Taking $v_0=\{\frac{j}{j+1}\vec{e}\}_{j=1}^{\infty}\in S(1)$ where $\vec{e}$ is a unit vector on the boundary of the unit ball of $\mathbb{K}^{N+1}$, we have
\begin{align*}
 \|  I - T^{-1}\|_{\mathscr{L}(l^{\infty}(\mathbb{K}^{N+1});\, l^{\infty}(\mathbb{K}^{N+1}))}
=& \sup_{v\in S(1)} \sup_{j\in\mathbb{N}}(1-c^{-j})|v_j|\\
\geq &  \sup_{v=v_0} \left( \sup_{j\in\mathbb{N}}\frac{j}{j+1}(1-c^{-j}) \right)\\
=& 1.
\end{align*} It follows that $\|   I - T^{-1}\|_{\mathscr{L}(l^{\infty}(\mathbb{K}^{N+1});\, l^{\infty}(\mathbb{K}^{N+1}))}=1$. Moreover, we have
\begin{align*}
 \| \lambda I + T^{-1}\|_{\mathscr{L}(l^{\infty}(\mathbb{K}^{N+1});\, l^{\infty}(\mathbb{K}^{N+1}))}
=& \sup_{v\in S(1)} \sup_{j\in\mathbb{N}}(\lambda+c^{-j})|v_j|\\
\leq &  \sup_{v\in S(1)} \left(\lambda \sup_{j\in\mathbb{N}}|v_j|+\sup_{j\in\mathbb{N}} c^{-j} |v_j|\right)\\
=& \lambda+\frac{1}{c}.
\end{align*}Taking $v'_0=\{c^{-(j-1)}\vec{e}\}_{j=1}^{\infty}\in S(1)$, we have
\begin{align*}
 \|\lambda I + T^{-1}\|_{\mathscr{L}(l^{\infty}(\mathbb{K}^{N+1});\, l^{\infty}(\mathbb{K}^{N+1}))}
=& \sup_{v\in S(1)} \sup_{j\in\mathbb{N}}(\lambda+c^{-j})|v_j|\\
\geq &  \sup_{v=v'_0} \left( \sup_{j\in\mathbb{N}}c^{-(j-1)}(\lambda+c^{-j}) \right)\\
=& \lambda+\frac{1}{c}.
\end{align*} It follows that $\|\lambda I + T^{-1}\|_{\mathscr{L}(l^{\infty}(\mathbb{K}^{N+1});\, l^{\infty}(\mathbb{K}^{N+1}))}= \lambda+\frac{1}{c}$.  

Finally, we show that $\| (1-\lambda) I - T^{-1}\|_{\mathscr{L}(l^{\infty}(\mathbb{K}^{N+1});\, l^{\infty}(\mathbb{K}^{N+1}))}=1-\lambda.$ Note that we have $1-\lambda-c^{-j}>0$ for all $j\in\mathbb{N}$ since $\lambda\in (0,\,1-1/c)$. Then on the one hand we have
\begin{align*}
\| (1-\lambda) I - T^{-1}\|_{\mathscr{L}(l^{\infty}(\mathbb{K}^{N+1});\, l^{\infty}(\mathbb{K}^{N+1}))}=& \sup_{v\in B(1)} \sup_{j\in\mathbb{N}}(1-\lambda-c^{-j})|v_j|\\
\leq & \sup_{j\in\mathbb{N}}(1-\lambda-c^{-j})\\
=& 1-\lambda.
\end{align*}On the other hand,
\begin{align*}
\| (1-\lambda) I - T^{-1}\|_{\mathscr{L}(l^{\infty}(\mathbb{K}^{N+1});\, l^{\infty}(\mathbb{K}^{N+1}))}=& \sup_{v\in B(1)} \sup_{j\in\mathbb{N}}(1-\lambda-c^{-j})|v_j|\\
\geq &  \sup_{v=v'_0}\sup_{j\in\mathbb{N}}(1-\lambda-c^{-j})|v_j|\\
=&   \sup_{j\in\mathbb{N}}(1-\lambda-c^{-j})\frac{j}{j+1}\\
=& 1-\lambda.
\end{align*}It follows that $\|(1-\lambda) I - T^{-1}\|_{\mathscr{L}(l^{\infty}(\mathbb{K}^{N+1});\, l^{\infty}(\mathbb{K}^{N+1}))}=1-\lambda.$\hfill \qed
\end{proof}

\begin{lemma}\label{Extension-operator}
Let the mapping $T:  l_c^{\infty} (\mathbb{K}^{N+1})\rightarrow l^{\infty}(\mathbb{K}^{N+1})$ 
be as in Lemma~\ref{OMT}. Then for every $\lambda\geq 0$,    the mapping
 $(\lambda T + I)^{-1}: l^{\infty}(\mathbb{K}^{N+1})\rightarrow l_c^{\infty}(\mathbb{K}^{N+1})\subset l^{\infty}(\mathbb{K}^{N+1})$ is continuous with norm
\begin{align*}  
 \| (\lambda T + I)^{-1}\|_{\mathscr{L}(l^{\infty}(\mathbb{K}^{N+1});\, l^{\infty}(\mathbb{K}^{N+1}))}&=\frac{1}{c\lambda+1}.
\end{align*} 
\end{lemma}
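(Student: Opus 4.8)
The plan is to exploit the diagonal structure of $T$. Since $T$ multiplies the $j$-th block by $c^{j}$ (as recorded in Lemma~\ref{OMT}), the operator $\lambda T + I$ multiplies the $j$-th block by the scalar $\lambda c^{j}+1$, and because $c>1$ and $\lambda\geq 0$ we have $\lambda c^{j}+1\geq 1>0$ for every $j$. I would first argue that $\lambda T + I$ is a bijection of $l_c^{\infty}(\mathbb{K}^{N+1})$ onto $l^{\infty}(\mathbb{K}^{N+1})$ whose inverse is given explicitly by
\[
(\lambda T + I)^{-1}(v_1,\,v_2,\,\cdots,\,v_j,\,\cdots)=\left(\frac{v_1}{\lambda c + 1},\,\frac{v_2}{\lambda c^2 + 1},\,\cdots,\,\frac{v_j}{\lambda c^j + 1},\,\cdots\right),
\]
and then verify the two assertions separately: that the image lands in $l_c^{\infty}(\mathbb{K}^{N+1})$ and that the $\mathscr{L}(l^{\infty}(\mathbb{K}^{N+1});\,l^{\infty}(\mathbb{K}^{N+1}))$-norm equals $\frac{1}{c\lambda+1}$.

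For the range inclusion, I would fix $v\in l^{\infty}(\mathbb{K}^{N+1})$ and estimate the $l_c^{\infty}$-norm of the image, using $c^{j}/(\lambda c^{j}+1)=1/(\lambda + c^{-j})$:
\[
\sup_{j\in\mathbb{N}} c^{j}\,\frac{|v_j|}{\lambda c^{j}+1}=\sup_{j\in\mathbb{N}}\frac{|v_j|}{\lambda + c^{-j}}\leq \frac{1}{\lambda}\,\|v\|_{l^{\infty}(\mathbb{K}^{N+1})},
\]
which is finite when $\lambda>0$; hence the image lies in $l_c^{\infty}(\mathbb{K}^{N+1})$. For the norm I would mimic the two-sided argument of Lemma~\ref{Compact-operator}. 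Since $\lambda c^{j}+1$ is increasing in $j$, the coefficients $1/(\lambda c^{j}+1)$ are decreasing, so for every $v$ with $\|v\|_{l^{\infty}(\mathbb{K}^{N+1})}=1$ one gets $\sup_{j}\frac{|v_j|}{\lambda c^{j}+1}\leq\frac{1}{\lambda c+1}$, which is the upper bound. Taking $v=(\vec{e},\,0,\,0,\,\cdots)$ with $\vec{e}$ a unit vector of $\mathbb{K}^{N+1}$ realizes equality, so the norm is exactly $\frac{1}{c\lambda+1}$.

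The computation is routine once the diagonal structure is isolated, so the only point demanding care is the domain and codomain, and in particular the boundary case $\lambda=0$. There $(\lambda T+I)^{-1}$ reduces to the identity, whose norm is still $1=\frac{1}{c\cdot 0+1}$ in agreement with the stated formula, but which does \emph{not} map $l^{\infty}(\mathbb{K}^{N+1})$ into the proper subspace $l_c^{\infty}(\mathbb{K}^{N+1})$; the range inclusion is genuinely available only for $\lambda>0$, where the factor $\frac{1}{\lambda}$ above controls $\sup_{j}c^{j}|v_j|$. I therefore expect the main (and modest) subtlety to be stating precisely that $\lambda T + I$ is onto $l^{\infty}(\mathbb{K}^{N+1})$ for $\lambda>0$ — so that $(\lambda T+I)^{-1}$ is defined on all of $l^{\infty}(\mathbb{K}^{N+1})$ and takes values in $l_c^{\infty}(\mathbb{K}^{N+1})$ — rather than any difficult estimate.
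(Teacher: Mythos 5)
Your proof is correct and, for the norm computation, follows essentially the paper's own route: both arguments use the diagonal action of $(\lambda T+I)^{-1}$, obtain the upper bound from the fact that the coefficients $\frac{1}{\lambda c^{j}+1}$ decrease in $j$, and realize the value $\frac{1}{\lambda c+1}$ with a test vector --- the paper takes $v_0=\{c^{-(j-1)}\vec{e}\,\}_{j=1}^{\infty}$, while your choice $(\vec{e},\,0,\,0,\,\cdots)$ is even simpler and attains the norm exactly. Beyond that, your write-up is more complete than the paper's: the paper's proof consists solely of the norm computation and never verifies the asserted codomain, whereas you check that for $\lambda>0$ the image of $l^{\infty}(\mathbb{K}^{N+1})$ lies in $l_c^{\infty}(\mathbb{K}^{N+1})$ via $\sup_{j\in\mathbb{N}} c^{j}|v_j|/(\lambda c^{j}+1)\leq \|v\|_{l^{\infty}(\mathbb{K}^{N+1})}/\lambda$, and you note the surjectivity of $\lambda T+I$ needed for $(\lambda T+I)^{-1}$ to be defined on all of $l^{\infty}(\mathbb{K}^{N+1})$. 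Your remark about $\lambda=0$ is a genuine (if minor) correction to the lemma as stated: at $\lambda=0$ the operator reduces to the identity, the norm formula still reads $1=\frac{1}{c\cdot 0+1}$, but the range is certainly not contained in the proper subspace $l_c^{\infty}(\mathbb{K}^{N+1})$, so the statement's ``for every $\lambda\geq 0$'' should really be ``$\lambda>0$'' as far as the codomain claim is concerned; the paper silently ignores this case, and indeed only ever uses the lemma for the scalar bound $\frac{1}{\lambda c^{j}+1}$ (e.g.\ in Claims 3 and 5 of Theorem~\ref{Analyticity-th}), where the codomain assertion plays no role.
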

\begin{proof} We compute $\|(\lambda T + I)^{-1}\|_{\mathscr{L}(l^{\infty}(\mathbb{K}^{N+1});\, l^{\infty}(\mathbb{K}^{N+1}))}$. Let $S(1)=\{v\in l^{\infty}(\mathbb{K}^{N+1}):\sup_{j\in\mathbb{N}}|v_j|=1\}\subset l^{\infty}(\mathbb{K}^{N+1}).$ Note that  
\begin{align*}
 \|(\lambda T + I)^{-1}\|_{\mathscr{L}(l^{\infty}(\mathbb{K}^{N+1});\, l^{\infty}(\mathbb{K}^{N+1}))}
=& \sup_{v\in S(1)} \sup_{j\in\mathbb{N}}\frac{|v_j|}{\lambda {c^j}+1}\\
=& \sup_{v\in S(1)} \sup_{j\in\mathbb{N}}\frac{|v_j|}{\lambda c^j +1}\\
\leq &  \sup_{j\in\mathbb{N}} \frac{1}{\lambda c^j +1}\\
=& \frac{1}{\lambda c +1}.
\end{align*} Taking $v_0=\{c^{-(j-1)}\vec{e}\}_{j=1}^{\infty}\in B_c(1)$, we have
\begin{align*}\|(\lambda T + I)^{-1}\|_{\mathscr{L}(l^{\infty}(\mathbb{K}^{N+1});\, l^{\infty}(\mathbb{K}^{N+1}))}=&\sup_{v\in S(1)} \sup_{j\in\mathbb{N}}\frac{|v_j|}{\lambda {c^j}+1}\\
\geq & \sup_{v=v_0} \sup_{j\in\mathbb{N}}\frac{1}{\lambda {c^j}+1}|v_j|\\
\geq &   \sup_{j\in\mathbb{N}}\frac{c^{-(j-1)}}{\lambda {c^j}+1} \\
=& \frac{1}{\lambda c +1}.
\end{align*} It follows that $\|(\lambda T + I)^{-1}\|_{\mathscr{L}(l^{\infty}(\mathbb{K}^{N+1});\, l^{\infty}(\mathbb{K}^{N+1}))}= \frac{1}{\lambda c +1}$. 
\hfill\hspace*{1em}\qed
\end{proof}

The following three lemmas address the well-posedness of system~(\ref{Abs-ODE-S-infty-new}) and the analyticity of the map $H$.

\begin{lemma}\label{Lemma-l-infty}Assume $\textrm{(A1)-(A2)}$. For every  sequence 
$\{(u_i,\,v_i)\}_{i=0}^{+\infty}\subset U\times V$, let $\mu_i=(u_i,\,u_{i+1},\,\cdots,\,u_{i+M-1},\,v_i)\in U^M\times V$. Then we have
\begin{align*}
 \lim_{j\rightarrow+\infty} \frac{1}{c^j}\prod_{i=0}^{j-1}|1-g(\mu_i)|=0,
\end{align*}
Moreover, for every $m\in\mathbb{N}$, we have
\[
 \lim_{j\rightarrow+\infty} \frac{j^m}{c^j} \prod_{i=0}^{j-1}|1-g(\mu_i)|=0.
\]
\end{lemma}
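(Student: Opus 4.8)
The plan is to read (A2) as a uniform modulus bound. The inequality $|1-g(\gamma_1,\gamma_2)-\frac{c+l}{2}|<\frac{c-l}{2}$ confines the value $1-g$ to the open disk in $\mathbb{C}$ centred at the real point $\frac{c+l}{2}$ with radius $\frac{c-l}{2}$; since this disk is contained in $\{w\in\mathbb{C}:|w|<c\}$ (its farthest point from the origin being $c$), I expect to extract a single constant $\rho<c$ with $|1-g(\mu_i)|\le\rho$ for all $i$. Once this is in hand, the factor $c^{-j}$ produces geometric decay and both limits follow at once.

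Concretely, first I would invoke the compactness of $K:=\overline{U}^M\times\overline{V}$ (the closure of a bounded set in a finite-dimensional space) together with the continuity of $g$ furnished by (A1), so that $\gamma\mapsto|1-g(\gamma)|$ attains a maximum $\rho$ on $K$ at some point $\gamma^\ast$. Then I would apply (A2) at $\gamma^\ast$ and the triangle inequality:
\[
\rho=|1-g(\gamma^\ast)|\le\Big|1-g(\gamma^\ast)-\tfrac{c+l}{2}\Big|+\tfrac{c+l}{2}<\tfrac{c-l}{2}+\tfrac{c+l}{2}=c,
\]
giving $q:=\rho/c\in(0,1)$ (note $\rho\ge l>0$ since the disk avoids the origin). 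Since every $\mu_i$ lies in $U^M\times V\subseteq K$, each factor satisfies $|1-g(\mu_i)|\le\rho$, whence $0\le c^{-j}\prod_{i=0}^{j-1}|1-g(\mu_i)|\le(\rho/c)^j=q^j\to0$. For the second assertion I would multiply through by $j^m$ and use the elementary fact that $j^m q^j\to0$ as $j\to\infty$ for any fixed $m\in\mathbb{N}$ and $q\in(0,1)$.

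The one point that deserves care---and the only real obstacle---is the \emph{uniformity} of the bound $\rho<c$. Applying (A2) to each $\mu_i$ separately yields only the pointwise strict inequalities $|1-g(\mu_i)|<c$, which by themselves need not be bounded away from $c$ as $i$ varies, and that would be insufficient to force the product to decay. The role of stating (A2) on the closure $\overline{U}^M\times\overline{V}$ rather than on the open set is precisely to supply, via compactness, a single $\rho$ strictly less than $c$ that dominates every factor simultaneously; this is the step on which the whole estimate turns.
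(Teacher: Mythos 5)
Your proposal is correct, and it shares the paper's opening move but then takes a genuinely simpler route to the limits. The paper also begins by extracting a uniform bound: it asserts from (A2) that $\sup|1-g|$ over $\overline{U}^M\times\overline{V}$ is strictly less than $c$ (implicitly using the compactness and continuity that you spell out), and writes this bound as $e^{s(1-1/N_0)}$ where $c=e^s$. But instead of concluding geometrically as you do, the paper deliberately weakens the uniform bound to $|1-g(\mu_i)|\le e^{s(1-n/i)}$ for $i\ge nN_0$, takes logarithms, compares the resulting sums $\sum_i 1/i$ with harmonic numbers $H_j$, and invokes $\lim_{j\to\infty}(\ln j-H_j)=-\gamma$ (Euler--Mascheroni) to get $\limsup_{j\to\infty}\frac{j^m}{c^j}\prod_{i=0}^{j-1}|1-g(\mu_i)|<\infty$, from which it deduces that $\frac{j^{m-1}}{c^j}\prod_{i=0}^{j-1}|1-g(\mu_i)|\to 0$ and then uses the arbitrariness of $m$. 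Your geometric argument ($\prod_{i=0}^{j-1}|1-g(\mu_i)|\le\rho^j$ with $\rho<c$, hence $j^m(\rho/c)^j\to 0$) reaches both conclusions in one stroke, and it also sidesteps a defect in the paper's computation: when the paper rewrites $\exp\bigl(s\sum_{i=mN_0}^{j-1}(-\tfrac{m}{i})\bigr)$ in terms of $\exp(-mH_j)$, the factor $s$ silently disappears, and the claimed equality holds only if $s=1$, i.e.\ $c=e$; the paper's method is repairable (choose $n$ with $ns\ge m+1$ rather than $n=m$), but your route makes the repair unnecessary. One caveat applies equally to you and to the paper: (A1) literally gives continuity of $g$ only on the open set $U^M\times V$, so attaining a maximum on the closure tacitly assumes that $g$ extends continuously to $\overline{U}^M\times\overline{V}$ --- an assumption already implicit in (A2) being stated on the closure.
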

\begin{proof}   By (A2), we have  $|1-g(\gamma_1,\,\gamma_2)|<c$ and $|1-g(\gamma_1,\,\gamma_2)|$ with $(\gamma_1,\,\gamma_2)\in \overline{U}^M\times \overline{V}$ has a supremum less than $c$. Let $s>0$ be such that $c=e^s$. Then there exists $N_0\geq 1$, $N_0\in\mathbb{N}$ so that $|1-g(\mu_i)|\leq e^{s(1-\frac{1}{N_0})}$    for all $i\in\mathbb{N}$. Then for every $n\in\mathbb{N}$ we have
\[|1-g(\mu_i)|\leq  e^{s(1-\frac{1}{N_0})}\leq  e^{s(1-\frac{n}{i})}\textrm { for all }  i\geq nN_0. \] 
It follows that $\ln \left( \frac{|1-g(\mu_i)|}{c}\right)\leq -\frac{ns}{i} $ for all  $  i\geq nN_0$. Then for $j> nN_0$   we have
\begin{align}
 \sum_{i=0}^{j-1}\ln \left(\frac{|1-g(\mu_i )|}{c}\right)& =\sum_{i=0}^{nN_0-1}\ln \left(\frac{|1-g(\mu_i)|}{c}\right)+\sum_{i=nN_0}^{j-1}\ln \left(\frac{|1-g(\mu_i)|}{c}\right)\notag\\
& \leq \sum_{i=0}^{nN_0-1}\ln \left(\frac{|1-g(\mu_i)|}{c}\right)+s\sum_{i=nN_0}^{j-1} \left(-\frac{n}{i}\right).\label{analyticity-infty-new}
\end{align}
Let $c_0=\sum_{i=0}^{nN_0-1}\ln \left(\frac{|1-g(\mu_i)|}{c}\right)$. Then by (\ref{analyticity-infty-new}) and (A2), we have
\begin{align}
  0<\frac{1}{c^j}\prod_{i=0}^{j-1}|1-g(\mu_i)|& = \exp{\sum\limits_{i=0}^{j-1}\ln \left(\frac{|1-g(\mu_i)|}{c}\right)}\notag\\
& \leq  e^{c_0}\exp{\left(s\sum\limits_{i=nN_0}^{j-1} -\frac{n}{i}\right)}.\label{analyticity-infty-inequality}
\end{align}
Taking limits as $j\rightarrow +\infty$ in (\ref{analyticity-infty-inequality}) we have
\[
 \lim_{j\rightarrow+\infty} \frac{1}{c^j}\prod_{i=0}^{j-1}|1-g(\mu_i)|=0.
\]
Choosing $n=m$ in the inequality (\ref{analyticity-infty-inequality}), we have
\begin{align}
0< \frac{j^m}{c^j}  \prod_{i=0}^{j-1}|1-g(\mu_i)|& \leq j^m e^{c_0}\exp{\left(s\sum\limits_{i=mN_0}^{j-1} -\frac{m}{i} \right)}\notag\\
& = \exp\left(c_0+\sum\limits_{i=1}^{mN_0-1}\left(\frac{m}{i}\right)+\frac{m}{j}\right) \frac{j^m}{\exp{(mH_j)}}\notag\\
& = \exp\left(c_0+\sum\limits_{i=1}^{mN_0-1}\left(\frac{m}{i}\right)+\frac{m}{j}\right)\exp(m\ln j-mH_j),\label{analyticity-infty-inequality-2}
\end{align}
where $H_j=1+\frac{1}{2}+\cdots+\frac{1}{j}$ and $\sum_{i=1}^{mN_0-1}\left(\frac{m}{i}\right)$ is regarded 0 if $mN_0=1$. We note that $\lim_{j\rightarrow+\infty}\ln j-H_j=-\gamma$ where $\gamma>0$ is the Euler-M\'{a}scheroni constant. Taking supremum limits as $j\rightarrow +\infty$ in (\ref{analyticity-infty-inequality-2}) we have
\[
0<\limsup_{j\rightarrow+\infty}\frac{j^m}{c^j} \prod_{i=0}^{j-1}|1-g(\mu_i)|\leq \exp\left(c_0+\sum\limits_{i=1}^{mN_0-1}\left(\frac{m}{i}\right)-m\gamma\right)<+\infty.
\]Then we have
\begin{align*}
  & \lim_{j\rightarrow+\infty}\frac{j^{m-1}}{c^j} \prod_{i=0}^{j-1}|1-g(\mu_i)|\\
  \leq&  \limsup_{j\rightarrow+\infty}\frac{j^m}{c^j} \prod_{i=0}^{j-1}|1-g(\mu_i)|\lim_{j\rightarrow+\infty}\frac{1}{j}\\
   = &\,0.
\end{align*}
Since $m\in\mathbb{N}$ is arbitrary, it follows that $ \lim_{j\rightarrow+\infty} \frac{j^m}{c^j} \prod_{i=0}^{j-1}|1-g(\mu_i)|=0$ for all $m\in\mathbb{N}$. \hfill \qed
\end{proof}
 Let $ l^{\infty}(U\times V)$  be the subset of  $l^{\infty}(\mathbb{K}^{N+1})$ defined by
 \[
 l^{\infty}(U\times V)=\prod_{j=0}^\infty (U\times V).
 \]
Note that $ l^{\infty}(U\times V)$ is not an open set of $l^{\infty}(\mathbb{K}^{N+1})$ if $l^{\infty}(\mathbb{K}^{N+1})$ is equipped with the product topology. However,  we are concerned with the  following set:
\begin{align}\label{set-A}
A=\{w=(w_0,\,w_1,\,\cdots)\in l^{\infty}(U\times V): \mbox{$\{w_j\}_{j=0}^\infty\subset Q_0$ for some compact $Q_0\subset U\times V$} \}.
\end{align}
For every $w=(w_0,\,w_1,\,\cdots)\in A$, we can find an open set $P$ and a compact set $Q$ such that $\{w_j\}_{j=0}^\infty\subset P\subset Q\subset U\times V$. Then  $w\in l^\infty(P)\subset A\subset  l^{\infty}(U\times V)$. Namely, $A$ is open under the box topology.

 We also define the projections $\chi_i: l^{\infty}(U\times V)\rightarrow U^M\times V$ with $i\in\{0,\,1,\,2,\,\cdots\}$ by 
 \begin{align}\label{chi}
  \chi_i (w)=(u_i,\,u_{i+1},\,\cdots,\,u_{i+M-1},\,v_i)
 \end{align}for every $w=((u_i,\,v_i))_{j=1}^\infty\in  l^{\infty}(U\times V).$
\begin{lemma}\label{Lemma-G}Let $A$ be defined at (\ref{set-A}).
Assume $($\textrm{A1 -- A2}$\,)$.
 The mapping $G$ defined by
\[
 G: A\ni w=(w_0,\,w_1,\,w_2,\,\cdots,\,w_i,\,\cdots)\rightarrow G(w)=\left(\frac{1}{c^j}\prod_{i=0}^{j-1}(1-g(\chi_i (w)))\right)_{j=1}^{+\infty},
\]where $w_i=(u_i,\,v_i)\in U\times V$,  is   continuous and analytic from $A\subset l^{\infty}(U\times V)$ to $ l^{\infty}(\mathbb{C}^{N+1})$.
\end{lemma}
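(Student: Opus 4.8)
The plan is to verify the two requirements built into the paper's definition of analyticity: that $G$ is continuous from $A$ into $l^{\infty}(\mathbb{C}^{N+1})$, and that along every complex line through a point of $A$ the restriction of $G$ is an $l^{\infty}(\mathbb{C}^{N+1})$-valued analytic function of one complex variable. The quantity I would use repeatedly is the uniform bound coming from (A2): since $\overline{U}^M\times\overline{V}$ is compact and $|1-g|<c$ there, the constant $\rho:=\sup_{\overline{U}^M\times\overline{V}}|1-g(\cdot)|$ satisfies $\rho<c$. For any $w\in A$ the projections $\chi_i(w)$ lie in $U^M\times V\subset\overline{U}^M\times\overline{V}$, so the $j$-th entry of $G(w)$ is bounded by $(\rho/c)^{j}<1$; hence $G(w)\in l^{\infty}(\mathbb{C}^{N+1})$ and $G$ is well defined, with Lemma~\ref{Lemma-l-infty} giving the additional decay to $0$.

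For continuity I would fix $w\in A$ and estimate $G(w')-G(w)$ for $w'$ near $w$ in the $l^{\infty}$-norm via the telescoping identity for a difference of products. Writing $a_i=1-g(\chi_i(w'))$ and $b_i=1-g(\chi_i(w))$, each of modulus at most $\rho$, one obtains
\[
|G(w')_j-G(w)_j|\le \frac{\rho^{\,j-1}}{c^{\,j}}\sum_{k=0}^{j-1}\bigl|g(\chi_k(w'))-g(\chi_k(w))\bigr|.
\]
Because $g$ is analytic, hence $C^1$ and locally Lipschitz, it is Lipschitz with some constant $L$ on a fixed compact neighbourhood of the components of $w$ inside $U^M\times V$, so each summand is $\le LC\|w'-w\|_{l^{\infty}}$ for a constant $C$ absorbing the $M$ overlapping coordinates in $\chi_k$. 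Taking the supremum over $j$ and using $\sup_{j}\, j\,\rho^{\,j-1}/c^{\,j}<\infty$ (valid since $\rho<c$) yields a local Lipschitz bound $\|G(w')-G(w)\|_{l^{\infty}}\le C'\|w'-w\|_{l^{\infty}}$, hence continuity.

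For analyticity along lines I would fix $w_0\in A$ and $h\in l^{\infty}(\mathbb{C}^{N+1})$ and study $\phi(t):=G(w_0+th)$. Since the components of $w_0$ lie in a compact subset of the open set $U\times V$, for $|t|\le r$ with $r$ small the components of $w_0+th$ remain in a compact subset of $U\times V$, so $\phi$ is defined and $|\phi_j(t)|\le(\rho/c)^{j}$. Each entry $\phi_j(t)=c^{-j}\prod_{i=0}^{j-1}\bigl(1-g(\chi_i(w_0)+t\,\chi_i(h))\bigr)$ is a finite product of scalar functions analytic in $t$ by (A1) composed with an affine map, hence $\phi_j$ is analytic. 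Cauchy's estimates on $|t|=r$ give Taylor coefficients $a_{j,k}$ with $|a_{j,k}|\le(\rho/c)^{j}r^{-k}$, so the vectors $A_k:=(a_{j,k})_{j\ge1}$ satisfy $\|A_k\|_{l^{\infty}}\le(\rho/c)\,r^{-k}$ and the series $\sum_k A_k t^k$ converges in $l^{\infty}$-norm for $|t|<r$. Its sum coincides entrywise with $\phi(t)$, so $\phi$ is $l^{\infty}(\mathbb{C}^{N+1})$-valued analytic near $0$. Combined with continuity, this shows $G$ is analytic.

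I expect the main obstacle to be exactly this last passage: upgrading the entrywise analyticity of $\phi$ to genuine norm-analyticity of the $l^{\infty}$-valued map. The uniform geometric bound $(\rho/c)^{j}$ supplied by (A2) is indispensable here, as it is precisely what makes the vector-valued Taylor series norm-convergent; the same uniform tail control (through $\sup_j j\rho^{\,j-1}/c^{\,j}<\infty$) is what forces the continuity estimate to hold uniformly in $j$. Without the strict inequality $\rho<c$ in (A2) neither the series convergence nor the uniform-in-$j$ Lipschitz bound would survive, so that hypothesis is the pivot of the whole argument.
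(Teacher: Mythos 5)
Your proposal is correct, but it takes a genuinely different route from the paper's, most visibly in the analyticity step. For continuity the two arguments are cousins: you telescope the difference of products and invoke a local Lipschitz bound for $g$, while the paper applies the integral mean value theorem to the product as a function of all coordinates, which forces it to divide by $1-g$ and hence to use the lower bound $l<|1-g|$ from (A2) as well as the bound $\sup_j jc^{-j}\prod_i|1-g|<\infty$ from Lemma~\ref{Lemma-l-infty}; your telescoping needs only the upper bound, via the constant $\rho=\sup_{\overline{U}^M\times\overline{V}}|1-g|<c$ (legitimate, since (A2) is a strict inequality on a compact set on which $g$ is continuous --- this is the same quantity the paper extracts as $|1-g(\mu_i)|\le c^{1-1/N_0}$ inside its proof of Lemma~\ref{Lemma-l-infty}, and your geometric bound $(\rho/c)^j$ in fact gives a one-line proof of that lemma, bypassing the paper's harmonic-number computation). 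For analyticity the paper writes down the candidate Gateaux derivative $\bar{G}h$ explicitly and proves norm convergence of the difference quotient to it, at the cost of two further mean-value estimates and a bound $M_2$ on second derivatives of $g$. You instead note that each coordinate $\phi_j(t)=c^{-j}\prod_{i=0}^{j-1}\bigl(1-g(\chi_i(w_0)+t\chi_i(h))\bigr)$ is a scalar analytic function bounded by $(\rho/c)^j$ on a fixed disk, apply Cauchy's estimates to get $|a_{j,k}|\le(\rho/c)^jr^{-k}$, and assemble a norm-convergent $l^\infty$-valued power series agreeing coordinatewise, hence in norm, with $\phi$. This ``coordinatewise analyticity plus uniform Cauchy estimates implies norm analyticity'' device is cleaner: it requires no derivative formula and no second-derivative control, and it yields analyticity on a whole disk at once rather than differentiability point by point. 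What the paper's longer computation buys in exchange is an explicit expression for the derivative $\bar{G}h$, which your argument never produces; if that formula is wanted later, it can be recovered by differentiating your power series term by term. Both proofs hinge on exactly the pivot you identify: the strict inequality $\rho<c$ forced by (A2).
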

\begin{proof}  
 By Lemma~\ref{Lemma-l-infty}, we know that   $G$ is a mapping from $l^{\infty}(\mathbb{C}^{N+1})$ to   $  l^{\infty}(\mathbb{C}^{N+1})$. Note that for every $i,\,j\in\mathbb{N}$ with $0\leq i\leq j-1$, we have
\begin{align}\label{product-derivative}
 \frac{\partial }{\partial \mu_i}\prod_{i=0}^{j-1}(1-g(\mu_i  ))&=
 \frac{-\frac{\partial }{\partial  \mu_i}g(\mu_i)}{(1-g(\mu_i))}\prod_{i=0}^{j-1}(1-g(\mu_i  )).
\end{align}
Let $(\mu_0,\,\mu_1,\,\cdots,\,\mu_{j-1})$ denote a column vector in $\bigoplus\limits_{i=0}^{j-1}\mathbb{C}^{MN+1}$. Then we have
\begin{align*}
 &\frac{\partial }{\partial (\mu_0,\,\mu_1,\,\cdots,\,\mu_{j-1})}\prod_{i=0}^{j-1}(1-g(\mu_i  ))\\
=&\left(\prod_{i=0}^{j-1}(1-g(\mu_i))\right)
\left(\frac{-\frac{\partial }{\partial  \mu_0} g(\mu_0)}{(1-g(\mu_0))},\,\frac{-\frac{\partial }{\partial  \mu_1} g(\mu_1)}{(1-g(\mu_1))},\,\cdots,\,\frac{-\frac{\partial }{\partial  \mu_{j-1}} g(\mu_{j-1})}{(1-g(\mu_{j-1}))}\right),
\end{align*}
which is also regarded as a column vector in $\bigoplus\limits_{i=0}^{j-1}\mathbb{C}^{MN+1}$.

For every $\epsilon>0$, choose $\delta=\epsilon$, for every $w_1=({w_1}_i),\,w_2=({w_2}_i)\in A$ with $|w_1-w_2|_{l^{\infty}(\mathbb{C}^{N+1})}<\delta$,  by (\ref{product-derivative}) and the Integral Mean Value Theorem,  we have
\begin{align*}
 |G(w_1)-G(w_2)|_{l^{\infty}(\mathbb{C}^{N+1})} &=\sup_{j\in\mathbb{N}}\frac{1}{c^j}\left|\prod_{i=0}^{j-1}(1-g(\chi_i(w_1)  ))-\prod_{i=0}^{j-1}(1-g(\chi_i(w_2)))\right|\\
&\leq  \sup_{j\in\mathbb{N}}\frac{1}{c^j}\left|\left(\prod_{i=0}^{j-1}(1-g(\bar{\chi}_i  ))\right)
 \sum_{i=0}^{j-1}\frac{-\frac{\partial }{\partial  \chi_i} g(\bar{\chi}_i  )}{(1-g(\bar{\chi}_i  ))}\left(\chi_i(w_1)-\chi_i(w_2)\right)\right|,
\end{align*}
where  $\bar{\chi}_i=\chi_i(w_1)+\theta (\chi_i(w_1)-\chi_i(w_2))$ for some $\theta\in [0,\,1]$. By (A2) we have
$l<|1-g(\bar{\chi}_i  )|<c$. By (A1), there exists $M_0>0$ so that   $ |\frac{\partial }{\partial  \chi_i} g(\bar{\chi}_i  )|<M_0$.  By Lemma~\ref{Lemma-l-infty}, there exists $M_1>0$ so that   $\sup_{j\in\mathbb{N}}\frac{j}{c^j} \prod_{i=0}^{j-1}|1-g(\bar{\chi}_i  )| <M_1$.  It follows that
\begin{align}\label{G-continuity}
 |G(w_1)-G(w_2)|_{l^{\infty}(\mathbb{C}^{N+1})} &\leq \sup_{j\in\mathbb{N}}\frac{1}{c^j}\left(\prod_{i=0}^{j-1}|1-g(\bar{\chi}_i  )|\right)
\sum_{i=0}^{j-1}\frac{M_0}{l}\left| (\chi_i(w_1)-\chi_i(w_2))\right|\notag\\
&\leq \sup_{j\in\mathbb{N}}\frac{1}{c^j}\left(\prod_{i=0}^{j-1}|1-g(\bar{\chi}_i  )|\right)
\frac{jM_0}{l}|w_1-w_2|_{l^{\infty}(\mathbb{C}^{N+1})}\notag\\
&= \sup_{j\in\mathbb{N}}\frac{j}{c^j}\left(\prod_{i=0}^{j-1}|1-g(\bar{\chi}_i  )|\right)
\frac{M_0}{l}|w_1-w_2|_{l^{\infty}(\mathbb{C}^{N+1})}\notag\\
&= \frac{M_0M_1}{l}  \epsilon,
\end{align}which implies that $G$ is continuous.
Next, we show that for every $w=(w_i)\in A\subset l^{\infty}(U\times V)$, and for all $h=(h_i)\in l^{\infty}(\mathbb{C}^{N+1})$, the mapping $\mathscr{G}:t\rightarrow G(w+th)$ is analytic in the neighborhood of $0\in \mathbb{C}$. Denote by $\bar{G}h$ the sequence
\[ \left(\frac{1}{c^j} \left(\prod_{i=0}^{j-1}(1-g(\chi_i(w)))\right)
\sum_{i=0}^{j-1}\frac{-\frac{\partial }{\partial  \chi_i} g(\chi_i(w))}{(1-g(\chi_i(w) ))}\chi_i(h)\right)_{j=1}^{\infty}.\]  Then by the same argument leading to (\ref{G-continuity}), we know that $\bar{G}h\in  l^{\infty}(\mathbb{C}^{N+1})$ and
\begin{align}\label{analyticity-G}
 &\left|\frac{G(w+th)-G(w)}{t}-\bar{G}h\right|_{l^{\infty}(\mathbb{C}^{N+1})}\notag\\
= & \sup_{j\in\mathbb{N}}\frac{1}{c^j}\left|\frac{1}{t}\left(\prod_{i=0}^{j-1}(1-g(\chi_i(w+th)  ))-\prod_{i=0}^{j-1}(1-g(\chi_i(w)))\right)\right.\notag\\
 & \left.-\left(\prod_{i=0}^{j-1}(1-g(\chi_i(w)))\right)
\sum_{i=0}^{j-1}\frac{-\frac{\partial }{\partial  \chi_i} g(\chi_i(w) )\chi_i(h)}{ 1-g(\chi_i(w) ) }\right|\notag\\
= & \sup_{j\in\mathbb{N}}\frac{1}{c^j}\left|\left(\prod_{i=0}^{j-1}(1-g(\tilde{\chi}_i  ))\right)
\sum_{i=0}^{j-1}\frac{-\frac{\partial }{\partial  \chi_i} g(\tilde{\chi}_i  )\chi_i(h)}{1-g(\tilde{\chi}_i )}\right.\notag\\
& \left.-\left(\prod_{i=0}^{j-1}(1-g(\chi_i(w)  ))\right)
\sum_{i=0}^{j-1}\frac{-\frac{\partial }{\partial \chi_i} g(\chi_i(w))\chi_i(h)}{1-g(\chi_i(w)  )}\right|\notag\\
\leq  & \sup_{j\in\mathbb{N}}\frac{1}{c^j}\left|\left(\prod_{i=0}^{j-1}(1-g(\tilde{\chi}_i  ))-\prod_{i=0}^{j-1}(1-g({\chi}_i(w)))\right)
\sum_{i=0}^{j-1}\frac{-\frac{\partial }{\partial  \chi_i} g(\tilde{\chi}_i  )\chi_i(h)}{1-g(\tilde{\chi}_i)}\right|\notag\\
 & +\sup_{j\in\mathbb{N}}\frac{1}{c^j}\left|\left(\prod_{i=0}^{j-1}(1-g(\chi_i(w)))\right)
\left(\sum_{i=0}^{j-1}\frac{-\frac{\partial }{\partial\chi_i} g(\tilde{\chi}_i)\chi_i(h)}{1-g(\tilde{\chi}_i)}- 
\sum_{i=0}^{j-1}\frac{-\frac{\partial }{\partial  u_i} g(\chi_i(w))\chi_i(h)}{1-g(\chi_i(w))}\right)\right|, 
\end{align}
where $\tilde{\chi}_i=\chi_i(w+t\theta\, h)$ for some $\theta\in [0,\,1]$. By applying the same argument leading to (\ref{G-continuity}) on the first term of the last inequality of (\ref {analyticity-G}) and   by Lemma~\ref{Lemma-l-infty} we have
\begin{align}\label{Analyticity-G-1}
& \lim_{t\rightarrow 0}\sup_{j\in\mathbb{N}}\frac{1}{c^j}\left|\left(\prod_{i=0}^{j-1}(1-g(\tilde{\chi}_i  ))-\prod_{i=0}^{j-1}(1-g(\chi_i(w)  ))\right)
\sum_{i=0}^{j-1}\frac{-\frac{\partial }{\partial \chi_i} g(\tilde{\chi}_i  )\chi_i(h)}{1-g(\chi_i(w)  )}\right|\notag\\
   \leq & \lim_{t\rightarrow 0}\sup_{j\in\mathbb{N}}\frac{1}{c^j}\left|\left(\prod_{i=0}^{j-1}(1-g(\tilde{\chi}_i  ))-\prod_{i=0}^{j-1}(1-g(\chi_i(w)))\right)
\right|\frac{jM_0}{l}|h|_{l^{\infty}(\mathbb{C}^{N+1})}\notag\\
 \leq & \lim_{t\rightarrow 0}\sup_{j\in\mathbb{N}}\frac{1}{c^j}\left|\left(\prod_{i=0}^{j-1}(1-g(\tilde{\tilde{\chi}}_i  ))\right)
\sum_{i=0}^{j-1}\frac{-\frac{\partial }{\partial \chi_i} g(\tilde{\tilde{\chi}}_i  )}{(1-g(\tilde{\tilde{\chi}}_i  ))}\theta \, t \chi_i(h)\right|\frac{jM_0}{l}|h|_{l^{\infty}(\mathbb{C}^{N+1})}\notag\\
 = & \lim_{t\rightarrow 0}\sup_{j\in\mathbb{N}}\frac{1}{c^j}\left| \prod_{i=0}^{j-1}(1-g(\tilde{\tilde{\chi}}_i  )) 
\right|\frac{j^2M_0^2}{l^2}|h|_{l^{\infty}(\mathbb{C}^{N+1})}^2\cdot |t|\notag\\
= & \lim_{t\rightarrow 0}\sup_{j\in\mathbb{N}}\frac{j^2}{c^j}\left| \prod_{i=0}^{j-1}(1-g(\tilde{\tilde{\chi}}_i  )) 
\right|\frac{M_0^2}{l^2}|h|_{l^{\infty}(\mathbb{C}^{N+1})}^2\cdot |t|\notag\\
= & \,0.
\end{align}
where $\tilde{\tilde{\chi}}_i=\chi_i(w+ t\theta\theta'h)$ for some $\theta'\in [0,\,1].$  By (A1), there exists $M_2>0$ so that     $ |\frac{\partial^2 }{\partial  \chi_i^2} g({\chi}_i(w)  )|<M_2$ for every  $w\in A$ and  $i\in\mathbb{N}$.
Then it follows from the Integral Mean Value Theorem that the second term of the last inequality of (\ref {analyticity-G}) satisfies that
\begin{align*}
& \sup_{j\in\mathbb{N}}\frac{1}{c^j}\left|\left(\prod_{i=0}^{j-1}(1-g(\tilde{\chi}_i))\right)
\left(\sum_{i=0}^{j-1}\frac{-\frac{\partial }{\partial \chi_i} g(\tilde{\chi}_i  )\chi_i(h)}{(1-g(\tilde{\chi}_i  ))}- 
\sum_{i=0}^{j-1}\frac{-\frac{\partial }{\partial \chi_i} g(\chi_i(w))\chi_i(h)}{(1-g(\chi_i(w) ))}\right)\right| \notag\\
= &  \sup_{j\in\mathbb{N}}\frac{1}{c^j}\left|\left(\prod_{i=0}^{j-1}(1-g(\tilde{\chi}_i  ))\right)\left[
\left(\sum_{i=0}^{j-1}\frac{-\frac{\partial }{\partial  \chi_i} g(\tilde{\chi}_i  )\chi_i(h)}{(1-g(\tilde{\chi}_i  ))}-  
\sum_{i=0}^{j-1}\frac{-\frac{\partial }{\partial  \chi_i} g(\chi_i(w))\chi_i(h)}{(1-g(\tilde{\chi}_i  ))} \right)\right.\right. \notag\\
&\left.\left. +\sum_{i=0}^{j-1}\frac{-\frac{\partial }{\partial  \chi_i} g(\chi_i(w))\chi_i(w)}{(1-g(\tilde{\chi}_i  ))}- 
\sum_{i=0}^{j-1}\frac{-\frac{\partial }{\partial \chi_i} g(\chi_i(w))\chi_i(w)}{(1-g(\chi_i(w)  ))}\right] \right|\notag\\
\leq &  \sup_{j\in\mathbb{N}}\frac{1}{c^j}\left|\left(\prod_{i=0}^{j-1}(1-g(\tilde{\chi}_i  ))\right)\left[
\frac{jM_2t}{l}|h|_{l^{\infty}(\mathbb{C}^{N+1})}^2\right.\right. \notag\\
&\left.\left. +\left(-\frac{\partial }{\partial  \chi_i} g(\chi_i(w)  )\chi_i(h)\right)\sum_{i=0}^{j-1}\left(\frac{1}{(1-g(\tilde{\chi}_i  ))}- 
 \frac{1}{(1-g(\chi_i(w)))}\right)\right] \right|\notag\\
= &  \sup_{j\in\mathbb{N}}\frac{1}{c^j}\left|\left(\prod_{i=0}^{j-1}(1-g(\tilde{\chi}_i  ))\right)\left[
\frac{jM_2t}{l}|h|_{l^{\infty}(\mathbb{C}^{N+1})}^2\right.\right. \notag\\
&\left.\left. +M_0|h|_{l^{\infty}(\mathbb{C}^{N+1})}\sum_{i=0}^{j-1}\left(\frac{g(\tilde{\chi}_i  )-g(\chi_i(w))}{(1-g(\tilde{\chi}_i  ))(1-g(\chi_i(w)))} 
\right)\right] \right|\notag\\
\leq  &  \sup_{j\in\mathbb{N}}\frac{1}{c^j}\left|\left(\prod_{i=0}^{j-1}(1-g(\tilde{\chi}_i  ))\right)\left[
\frac{jM_2 |h|_{l^{\infty}(\mathbb{C}^{N+1})}^2t}{l} +\frac{jM_0^2|h|_{l^{\infty}(\mathbb{C}^{N+1})}^2t}{l^2} \right] \right|.
\end{align*}
Then by Lemma~\ref{Lemma-l-infty} we have
\begin{align}\label{Analyticity-G-2}
 \lim_{t\rightarrow 0}\sup_{j\in\mathbb{N}}\frac{1}{c^j}\left|\left(\prod_{i=0}^{j-1}(1-g(\tilde{\chi}_i  ))\right)
\left(\sum_{i=0}^{j-1}\frac{-\frac{\partial }{\partial  \chi_i} g(\tilde{\chi}_i  )\chi_i(h)}{(1-g(\tilde{\chi}_i  ))}- 
\sum_{i=0}^{j-1}\frac{-\frac{\partial }{\partial  \chi_i} g(\chi_i(w))\chi_i(h)}{(1-g(\chi_i(w)  ))}\right)\right|=0.
\end{align}
By (\ref{analyticity-G}), (\ref{Analyticity-G-1}) and (\ref{Analyticity-G-2}) we have
\[
 \lim_{t\rightarrow 0}\left|\frac{G(w+th)-G(w)}{t}-\bar{G}h\right|_{l^{\infty}(\mathbb{C}^{N+1})}=0.
\]
\hfill\hspace*{1em}\qed
\end{proof}

\begin{lemma}\label{complete-continuity}Assume $($\textrm{A1 -- A2}$\,)$. 
Let the set $A$ and the map $G$ be as in Lemma~\ref{Lemma-G}. 
 Define $H: A\subset l^{\infty} (U\times V)\rightarrow l^{\infty} (\mathbb{K}^{N+1})$ by 
 \[
 H(\theta)=(F_j(\theta)G_j(\theta))_{j=1}^{\infty}\in l^{\infty} (\mathbb{K}^{N+1}),
 \]
where
\begin{align*}
\theta = & (\theta_1,\,\theta_2,\,\cdots,\,\theta_j,\,\cdots)=((u_1,\,v_1),\,(u_2,\,v_2),\,\cdots,\,(u_j,\,v_j),\,\cdots)\in l^{\infty} (\mathbb{K}^{N+1}),\\
F_j(\theta)= & \,\left(\frac{f(u_j,\, u_{j+1}  )}{1-g(u_j,\,u_{j+1},\,\cdots,u_{j+M-1}, \,v_j  )}, \frac{g(u_j,\,u_{j+1},\,\cdots,u_{j+M-1}, \,v_j  )}{1-g(u_j,\,u_{j+1},\,\cdots,u_{j+M-1}, \,v_j  )}\right),
\end{align*} for $j\geq 1,\,j\in\mathbb{N}$.
Then $H: \bar{A}\rightarrow l^{\infty} (\mathbb{K}^{N+1})$ is completely continuous and is analytic. 
\end{lemma}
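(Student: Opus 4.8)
The plan is to factor $H$ as the componentwise product $H=F\cdot G$ of the map $F=(F_j)_{j=1}^{\infty}$ and the map $G$ of Lemma~\ref{Lemma-G}, and to drive everything by two uniform-in-$j$ estimates coming from (A1)--(A2). First I would record that the continuous function $|1-g|$ attains on the compact set $\overline{U}^M\times\overline{V}$ a maximum $c'<c$ (by (A2)) and a positive minimum $\geq l$; hence on $\bar A$ the denominators in $F_j$ never vanish, the components $F_j$ are uniformly bounded, say $\|F(\theta)\|_{l^{\infty}(\mathbb{K}^{N+1})}\leq C_F$, and each component of $G$ satisfies the geometric bound
\[
|G_j(\theta)|=\frac{1}{c^{j}}\prod_{i=0}^{j-1}|1-g(\chi_i(\theta))|\leq\Big(\frac{c'}{c}\Big)^{j}=:q^{j},\qquad q<1,
\]
uniformly over $\theta\in\bar A$. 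This upgrades the qualitative decay of Lemma~\ref{Lemma-l-infty} to a clean uniform exponential bound.

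For complete continuity I would proceed in two steps. Continuity of $H$ follows from continuity of $G$ (Lemma~\ref{Lemma-G}) together with the uniform (in $j$) continuity of $F$ --- each $F_j$ being the same analytic function of finitely many coordinates restricted to the compact set $\overline{U}^M\times\overline{V}$, hence uniformly continuous there --- through the bilinear estimate
\[
\|H(\theta)-H(\theta')\|_{l^{\infty}}\leq\|F(\theta)-F(\theta')\|_{l^{\infty}}\,\|G(\theta)\|_{l^{\infty}}+\|F(\theta')\|_{l^{\infty}}\,\|G(\theta)-G(\theta')\|_{l^{\infty}},
\]
in which $\|F\|_{l^{\infty}}$ and $\|G\|_{l^{\infty}}$ stay bounded. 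For compactness I would show $H(\bar A)$ lies in a ball of $l_2^{\infty}(\mathbb{K}^{N+1})$: the two estimates give $j^{2}|H_j(\theta)|\leq C_F\,j^{2}q^{j}\leq\sup_{k\in\mathbb{N}}C_F k^{2}q^{k}<+\infty$ uniformly in $\theta$ and $j$. Since the embedding $I_2:l_2^{\infty}(\mathbb{K}^{N+1})\to l^{\infty}(\mathbb{K}^{N+1})$ is compact (Lemma~\ref{l-m-space}), the image $H(\bar A)$ is relatively compact in $l^{\infty}(\mathbb{K}^{N+1})$, and $H$ is completely continuous.

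For analyticity I would first show $F$ is analytic and then combine it with $G$. Fixing $\theta_0\in\bar A$ and $h\in l^{\infty}(\mathbb{K}^{N+1})$ and writing each $F_j$ through the common analytic function $\Phi(\xi)=\big(f(\xi_0,\xi_1)/(1-g(\xi)),\,g(\xi)/(1-g(\xi))\big)$ evaluated at $\xi=\chi_j(\theta_0+th)$, the difference quotients of $t\mapsto F(\theta_0+th)$ converge to $(D\Phi(\chi_j(\theta_0+th))\chi_j(h))_{j=1}^{\infty}$ uniformly in $j$, since (A1) bounds the first and second derivatives of $\Phi$ on the compact set while $|\chi_j(h)|\leq\|h\|_{l^{\infty}}$; this is exactly the uniform-in-$j$ second-order Taylor estimate carried out in Lemma~\ref{Lemma-G}. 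Hence $t\mapsto F(\theta_0+th)$ is complex differentiable near $0$ and, being continuous, $F$ is analytic. Finally, the componentwise multiplication $m:l^{\infty}\times l^{\infty}\to l^{\infty}$, $m(a,b)=(a_jb_j)_{j}$, is a bounded bilinear map, hence analytic, so $H=m\circ(F,G)$ is analytic as a composition of analytic maps.

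The main obstacle throughout is uniformity in the index $j$: every limit and bound must hold in the $l^{\infty}$-norm rather than merely componentwise. This is precisely where the geometric decay $|G_j|\leq q^{j}$ (for the compactness step) and the uniform control of $\Phi$ and its derivatives on $\overline{U}^M\times\overline{V}$ (for the analyticity step) do the real work; once these uniform estimates are secured, the remaining manipulations are routine and parallel those of Lemma~\ref{Lemma-G}.
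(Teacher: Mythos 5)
Your proposal is correct, and its skeleton matches the paper's: decompose $H$ componentwise as $F_jG_j$, show the image lands in a ball of $l_m^{\infty}(\mathbb{K}^{N+1})$ (you take $m=2$), invoke the compact embedding of Lemma~\ref{l-m-space} for compactness, and prove analyticity by the machinery of Lemma~\ref{Lemma-G}. Where you genuinely depart from the paper is in the key estimate and in the analyticity step, and both departures are improvements. The paper controls the decay of $H_j$ by citing Lemma~\ref{Lemma-l-infty}, which is a pointwise statement (for each fixed $\theta$, $j^m|F_j(\theta)G_j(\theta)|\to 0$); but complete continuity requires $H(B)$ to be \emph{bounded} in $l_m^{\infty}$, i.e.\ a bound uniform over $\theta$ in a bounded set, and the paper never makes that uniformity explicit. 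Your observation that (A2) plus compactness of $\overline{U}^M\times\overline{V}$ gives $|1-g|\leq c'<c$, hence $|G_j(\theta)|\leq (c'/c)^j=q^j$ uniformly over $\bar A$, closes exactly this gap: $\sup_{\theta}\sup_j j^2|H_j(\theta)|\leq C_F\sup_k k^2q^k<\infty$ in one line. Similarly, for analyticity the paper only says ``by the similar procedure for the proof of Lemma~\ref{Lemma-G}''; your route --- $F$ analytic by a uniform-in-$j$ Taylor estimate, $G$ analytic by Lemma~\ref{Lemma-G}, and $H=m\circ(F,G)$ with $m$ the bounded bilinear componentwise product --- is more structured and avoids redoing the product-rule computation from scratch. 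The one caveat, shared equally by the paper, is that asserting $\max_{\overline{U}^M\times\overline{V}}|1-g|<c$ implicitly assumes $g$ extends continuously to the closure, which (A2) presupposes but (A1) does not literally state.
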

\begin{proof} According to Lemma~\ref{l-m-space}, we only need to show that for every bounded set $B\subset  l^{\infty} (\mathbb{K}^{N+1})$, $H(B)$ is bounded in $ l_m^{\infty} (\mathbb{K}^{N+1})$. By (A1)-(A2), we know that $F(B)$ is bounded in  $ l^{\infty} (\mathbb{K}^{N+1})$. Then by Lemma~\ref{Lemma-l-infty}, we have
\begin{align*}
\lim_{j\rightarrow+\infty}{j^m} \left|F_j(\theta)G_j(\theta)\right|=0.
\end{align*}Therefore we have $H(\theta)\in l_m^{\infty} (\mathbb{K}^{N+1})$. By  Lemma~\ref{l-m-space}, $H$ is completely continuous. Then by (A1)--(A2), $F:  l^{\infty} (\mathbb{C}^{N+1})\ni \theta\rightarrow F(\theta)\in l^{\infty} (\mathbb{C}^{N+1})$ is analytic. Then by the similar procedure for the proof of Lemma~\ref{Lemma-G}, we can  show that for every $w=(w_i)\in A\subset l^{\infty}(U\times V)$, and for all $h=(h_i)\in l^{\infty}(\mathbb{C}^{N+1})$, the mapping $\mathscr{G}:t\rightarrow H(w+th)$ is analytic in the neighborhood of $0\in \mathbb{C}$.
\hfill\hspace*{1em}\qed
\end{proof}

\section{Main Results}\label{Main-results}
 
Let $\Omega$ be a bounded closed ball in $\mathbb{K}$. We denote by  $C(\Omega; l^{\infty}(\mathbb{K}^{N+1}))$   the space of  continuous   functions $u: \Omega\ni t\rightarrow u(t)\in l^{\infty}(\mathbb{K}^{N+1})$ and denote by  $C^1(\Omega; l^{\infty}(\mathbb{K}^{N+1}))$ the space of  continuously  differentiable   functions $u: \Omega\ni t\rightarrow u(t)\in l^{\infty}(\mathbb{K}^{N+1})$. Then it is clear that $C(\Omega; l^{\infty}(\mathbb{K}^{N+1}))$ 
  and $C^1(\Omega; l^{\infty}(\mathbb{K}^{N+1}))$ are Banach spaces equipped, respectively, with the norms
$\|u\|   =\max_{t\in  \Omega }|u(t)|_{l^{\infty}(\mathbb{K}^{N+1}))}$   and 
\[\|u\|  =\max\{\max_{t\in  \Omega }|u(t)|_{l^{\infty}(\mathbb{K}^{N+1})},\,\max_{t\in  \Omega }|u'(t)|_{ {l}^{\infty}(\mathbb{K}^{N+1})}\}. 
\]

 \begin{theorem}\label{Analyticity-th}Assume $($\textrm{A1 -- A2}$\,)$.    Let $(x,\,\tau)\in\mathbb{R}^{N+1}$ be a bounded solution  of  system (\ref{eqn-2}). Suppose that there exists a compact set $Q\subset U\times V$ such that $(x(t),\,\tau(t))\in Q$ for all $t\in\mathbb{R}$. Then  $(x,\,\tau)$ is analytic on $\mathbb{R}$. 
 \end{theorem}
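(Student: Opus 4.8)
The plan is to realize the given bounded solution as a trajectory of the abstract ODE~\eqref{Abs-ODE-S-infty-new}, to produce a complex-time extension of that trajectory, and then to read analyticity of $(x,\tau)$ off its first coordinate. First I would record the elementary reduction. Setting $w(t)=((y_j(t),z_j(t)))_{j\ge 1}$ with $(y_j,z_j)=(c^{-j}x(\eta^{j-1}(t)),c^{-j}\tau(\eta^{j-1}(t)))$, the hypothesis $(x(t),\tau(t))\in Q$ for a fixed compact $Q\subset U\times V$ gives $c^{j}|(y_j(t),z_j(t))|=|(x,\tau)(\eta^{j-1}(t))|\le r:=\sup_{Q}|\cdot|$ for all $j$ and $t$, so $w(t)$ lies in the closed ball $B_c(r)$ of $l_c^\infty(\mathbb R^{N+1})$ and $Tw(t)=((x,\tau)(\eta^{j-1}(t)))_{j\ge1}$ lies in the set $A$ of~\eqref{set-A} with every coordinate in the fixed compact $Q$. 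Moreover the computation~\eqref{New-Eqn-1} shows $w\in C^1(\mathbb R;l^\infty(\mathbb R^{N+1}))$ solves $\dot w=H(Tw)$. Since $(x(t),\tau(t))=c\,(y_1(t),z_1(t))$ is the first coordinate of $w$ rescaled by $c$, it suffices to prove that $w$ extends to an $l^\infty$-valued analytic function on a complex neighbourhood of $\mathbb R$.

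Then I would fix $t_0\in\mathbb R$ and pose the complex initial value problem near $t_0$ in integral form, $w(\zeta)=w(t_0)+\int_{t_0}^\zeta H(Tw(s))\,ds$ for $\zeta\in D_\delta(t_0)$, seeking solutions with values in $B_c(r')$ for a slightly larger radius $r'$; by Lemma~\ref{Banach-spaces} (and scaling) $B_c(r')$ is closed for $\|\cdot\|_{l^\infty(\mathbb K^{N+1})}$, so it is a complete metric space on which to run a fixed point argument. The obstruction is precisely the one flagged after~\eqref{Abs-ODE-S-infty-new}: the operator $T$ of Lemma~\ref{OMT} is an isometry from $(l_c^\infty,\|\cdot\|_{l_c^\infty})$ onto $(l^\infty,\|\cdot\|_{l^\infty})$ but is unbounded for the weaker norm, while $H(Tw)$ takes values in $l^\infty$ and, by the growth of $\prod_{i}|1-g|$, not in $l_c^\infty$. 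Hence the naive integral map neither preserves $B_c(r')$ in the norm that controls $T$ nor is contractive, and one cannot close a fixed point argument directly.

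To circumvent this I would introduce a regularized family, replacing the integrand $H(Tw)$ by $(\lambda T+I)^{-1}H(Tw)$ with $\lambda>0$ small (and, where needed, inserting the bounded resolvent-type operators whose norms are computed in Lemmas~\ref{Extension-operator} and~\ref{Compact-operator}). By Lemma~\ref{Extension-operator} the operator $(\lambda T+I)^{-1}$ maps $l^\infty$ continuously back into $l_c^\infty$ with $\|\cdot\|_{l^\infty}$-norm $(c\lambda+1)^{-1}\le 1$, so the perturbed field $w\mapsto(\lambda T+I)^{-1}H(Tw)$ is a genuine analytic, locally Lipschitz map taking values in $l_c^\infty$; its analyticity and Lipschitz bounds come from Lemma~\ref{complete-continuity} and Lemma~\ref{Lemma-G}, while the tail estimates of Lemma~\ref{Lemma-l-infty}, depending only on $Q$ and not on $t_0$, keep the relevant constants bounded uniformly in the base point. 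The uniform contraction principle then produces a unique solution $w_\lambda$ of the perturbed integral equation on a disk $D_{\delta_\lambda}(t_0)$, and, the field being analytic, $w_\lambda$ is analytic in $\zeta$ with values in $l_c^\infty\subset l^\infty$.

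The main obstacle, and the heart of the argument, is the passage $\lambda\to 0$: one must show that the radius of analyticity does not collapse, i.e.\ that the $w_\lambda$ are defined and equibounded on a common disk $D_{\delta}(t_0)$ with $\delta$ bounded below independently of $\lambda$ and of $t_0$, and that they converge there. The decisive inputs are that $H(Tw)\in l_m^\infty(\mathbb C^{N+1})$ for every $m$ by Lemma~\ref{complete-continuity}, so its coordinates tend to zero and therefore $(\lambda T+I)^{-1}H(Tw)\to H(Tw)$ in $\|\cdot\|_{l^\infty}$ as $\lambda\to0$, together with the uniform product decay of Lemma~\ref{Lemma-l-infty} which makes this convergence uniform over the compact range $Q$, and the closedness of $B_c(r')$ in the weak norm from Lemma~\ref{Banach-spaces} which confines all limits to $l_c^\infty$. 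Being a uniform limit of $l^\infty$-valued analytic functions, the limit $\bar w$ is analytic on $D_\delta(t_0)$, and by uniqueness along the real axis $\bar w=w$ on $\mathbb R\cap D_\delta(t_0)$, so $\bar w$ is the desired complex extension near $t_0$. Since $\delta$ is independent of $t_0$, these local extensions patch into an analytic extension on a strip about $\mathbb R$; extracting the first coordinate and multiplying by $c$ shows $(x,\tau)$ is analytic on $\mathbb R$. I expect the simultaneous bookkeeping of uniformity in $t_0$ and in $\lambda$ — ensuring the Weierstrass limit can be taken on one fixed disk — to be the genuinely delicate step.
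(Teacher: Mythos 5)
Your proposal follows the paper's route step for step: the same sequence transformation $w=((c^{-j}x(\eta^{j-1}),c^{-j}\tau(\eta^{j-1})))_{j\ge1}$, the same regularized field built from $(\lambda T+I)^{-1}$, and the same plan of letting $\lambda\to0^+$. The genuine gap is at the step you yourself flag as the heart of the matter: you give no mechanism that actually prevents the existence radius from collapsing as $\lambda\to0^+$, and the two ``decisive inputs'' you name cannot do that job. The regularized field is Lipschitz in $\|\cdot\|_{l^{\infty}}$ only with constant of order $1/\lambda$: the coordinate $H_j(Tw)$ depends on the coordinates $w_i$, $i\le j+M-1$, amplified by factors up to $c^{j+M-1}$, while $(\lambda T+I)^{-1}$ damps the $j$-th coordinate only by $(\lambda c^j+1)^{-1}$, so your contraction disk has radius $\delta_\lambda=O(\lambda)$. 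To continue $w_\lambda$ past that disk one must control for how long $Tw_\lambda(t)$ remains in $\bar{A}$, where $H$ is defined and bounded; but the available a priori bound $\|\frac{d}{dt}w_\lambda\|_{l^{\infty}}\le\widetilde{M}:=\sup_{\bar{A}}\|H\|$ is a weak-norm bound, and it only yields $c^j|(w_\lambda)_j(t)-(w_\lambda)_j(t_0)|\le c^j\widetilde{M}|t-t_0|$, which is useless for large $j$. Neither the uniform convergence $(\lambda T+I)^{-1}H\to H$ on the range nor the closedness of balls of $l_c^{\infty}$ (Lemma~\ref{Banach-spaces}) addresses this. The paper's resolution (Claim 2 of Theorem~\ref{Analyticity-th}) is a different idea you never invoke: by the continuation theorem (Theorem 10.5.5 of \cite{Dieudonne}), the maximal solution $\nu_\lambda=Tw_\lambda$ is either global or reaches the boundary $B$ of $A$ at some $t_1$; the distance measured through $T^{-1}$, namely $r=\inf_{\nu\in B}\|T^{-1}(\nu-\nu_{t_0})\|_{l^{\infty}}$, is strictly positive --- and proving $r>0$ uses the \emph{compactness} of $T^{-1}$ (Lemma~\ref{OMT}), which you cite only for its isometry/boundedness properties --- while the coordinatewise inequality $\|T^{-1}u\|\le\|(\lambda I+T^{-1})u\|$ and $(\lambda I+T^{-1})\frac{d}{dt}\nu_\lambda=H(\nu_\lambda)$ give $r\le\widetilde{M}|t_1-t_0|$. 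Hence every $\nu_\lambda$ exists at least on the disk of radius $h_0=r/\widetilde{M}$, independent of $\lambda$.

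A second, related omission concerns the convergence of $w_\lambda$ on that common disk. A Gronwall comparison between $w_{\lambda_1}$ and $w_{\lambda_2}$ fails for the same reason (the constant blows up like $e^{C|t-t_0|/\lambda}$), so the paper does not prove a Cauchy property; it extracts a limit by compactness --- Arzel\'a--Ascoli in $C(\Omega_{h_0};T^{-1}(\bar{A}))$, with equicontinuity from $\widetilde{M}$ and Lemma~\ref{Extension-operator}, and with pre-compactness of the target again coming from compactness of $T^{-1}$ --- and then identifies the limit as a solution of the unperturbed integral equation by a coordinatewise passage to the limit using the complete continuity of $H$ (Claims 3--5, Lemma~\ref{complete-continuity}). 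Your observation that $H(Tw)\in l_m^{\infty}$ forces $(\lambda T+I)^{-1}H(Tw)\to H(Tw)$ in $\|\cdot\|_{l^{\infty}}$ uniformly over the range is correct, and it would in fact streamline the paper's Claim 5; but it is a statement about the vector fields, not about the solutions, and cannot substitute for the two compactness arguments above. (Incidentally, the uniformity in $t_0$ you worry about is unnecessary: analyticity is a local property, so a $t_0$-dependent radius $h_0(t_0)>0$ suffices, exactly as in the paper.)
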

 \begin{proof}   We define  $\left((y_j,\,z_j)\right)_{j=1}^\infty\in C(\mathbb{R}; l_c^{\infty}(\mathbb{R}^{N+1}))$ by 
\begin{align*}(y_j(t),\, z_j(t))=\left(\frac{1}{c^{j}}x(\eta^{j-1}(t)),\,\frac{1}{c^{j}}\tau(\eta^{j-1}(t))\right)
\mbox{ for $j\geq 1,\,j\in\mathbb{N},\,t\in\mathbb{R}$.} 
\end{align*}
Then by the derivation in Section~\ref{SOPS-4-1},  for every $t\in\mathbb{R}$, $((y_j(t),\,z_j(t)))_{j=1}^{\infty}\in l_c^{\infty}(\mathbb{R}^{N+1})$ satisfies system~(\ref{New-Eqn-1}).

 Let  
\begin{align}\label{map-F}
 F(\theta)=(F_1(\theta),\,F_2(\theta),\,\cdots,\,F_j(\theta),\,\cdots),
\end{align}
where
\begin{align*}
\theta = & (\theta_1,\,\theta_2,\,\cdots,\,\theta_j,\,\cdots)=((u_1,\,v_1),\,(u_2,\,v_2),\,\cdots,\,(u_j,\,v_j),\,\cdots)\in l^{\infty} (\mathbb{R}^{N+1}),\\
F_j(\theta)= & \,\left(\frac{f(u_j,\, u_{j+1}  )}{1-g(u_j,\,u_{j+1},\,\cdots,u_{j+M-1}, \,v_j  )}, \frac{g(u_j,\,u_{j+1},\,\cdots,u_{j+M-1}, \,v_j  )}{1-g(u_j,\,u_{j+1},\,\cdots,u_{j+M-1}, \,v_j  )}\right),
\end{align*} for $j\geq 1,\,j\in\mathbb{N}$.

Let  $T$ be as in Lemma~\ref{OMT}, $G$ in Lemma~\ref{Lemma-G}. Then $w=\left((y_1,\,z_1),\, (y_2,\,z_2),\,\cdots)\right)\in C(\mathbb{R};l_c^{\infty} (\mathbb{R}^{N+1}))$ is a solution of the following ordinary differential equation
\begin{align}\label{Abs-ODE-S-infty}
 \frac{d}{dt}w(t)= H(Tw(t)), 
\end{align}
where $H(T(w))=(F_1(T(w))G_1(T(w)),\,F_2(T(w))G_2(T(w)),\,\cdots)\in l^{\infty} (\mathbb{R}^{N+1})$ and  $G_j$ is the $j$-th coordinate of $G$. 
Moreover, we notice that for every $j\in\mathbb{N}$,
\[
\{(c^jy_j(t),\,c^jz_j(t)): t\in\mathbb{R}\}=\{(x(t),\,y(t)):t\in\mathbb{R}\}\subset Q.
\] Then we have
$\{(c^jy_j(t),\,c^jz_j(t))_{j=1}^\infty: t\in\mathbb{R}\}\subset Q$ and 
\[
Tw(t)=(c^jy_j(t),\,c^jz_j(t))_{j=1}^\infty\in A
\]for every $t\in\mathbb{R}$, where    $A$ is defined by (\ref{set-A}).
Let   $w_{t_0}=\left((y_j(t_0),\,z_j(t_0))\right)_{j=1}^\infty\in l_c^{\infty} (\mathbb{R}^{N+1})$, $t_0\in\mathbb{R}$. Then  $w(t)$ is a solution of the following initial value problem 
\begin{align}\label{Abs-ODE-IVP}\left\{
\begin{aligned}
 \frac{d}{dt}w(t)& = H(Tw(t)), \\
w(t_0)&= w_{t_0}.
\end{aligned}
\right.
\end{align}
To prove the existence of complex extension of $w(t)\in l_c^{\infty} (\mathbb{R}^{N+1})$, 
  we put $\nu(t)=T w(t)\in  l^{\infty} (\mathbb{C}^{N+1})$ and consider equation (\ref{Abs-ODE-IVP}) in $l^{\infty} (\mathbb{C}^{N+1})$. Then equation (\ref{Abs-ODE-IVP})
is transformed into the following integral equation
\begin{align}\label{Abs-Integral-Eq} 
 T^{-1}{\nu}(t)= w_{t_0}+\int_{t_0}^tH(\nu(s)) ds,
\end{align}where the integral is taken along the linear path $\xi\rightarrow t_0+\xi(t-t_0)$, $0\leq \xi\leq 1$.

Denote by $\Omega_h=\{t\in\mathbb{C}: |t-t_0|\leq h\}$ for some $h>0$. To prove the  existence and uniqueness of the solution using the Uniform Contraction Principle, we consider fixed point problem associated with the following mapping
\begin{align}\label{L0-lambda-operator}
 L_0(\nu)(t)=(I-T^{-1})\nu(t) + w_{t_0}+\int_{t_0}^tH(\nu(s)) ds,  
\end{align} on $C(\Omega_h; A)$.    
However, by Lemma~\ref{Compact-operator}, $L_0$ is not contractive on $C(\Omega_h; l^\infty(\mathbb{C}^{N+1}))$ in general, but its perturbation 
 $L: C(\Omega_h; l^\infty(\mathbb{C}^{N+1}))\times [0,\,1] \rightarrow C(\Omega_h; l^\infty(\mathbb{C}^{N+1}))$ defined by
\begin{align}\label{L-lambda-operator}
 L(\nu,\,\lambda)(t)=((1-\lambda)I-T^{-1})\nu(t) + (T^{-1}+\lambda I)\nu_{t_0}+\int_{t_0}^tH(\nu(s)) ds, 
\end{align} is contractive on $C(\Omega_h; l^\infty(\mathbb{C}^{N+1}))$ for $\lambda\in (0,\,1-1/c)$ and some $h>0$, where $\nu_{t_0}=Tw_{t_0}$.

 If there exists a $\nu\in C(\Omega_h; A)$ such that $L(\nu)=\nu$, then $\nu$ is a solution of the following initial value problem:
\begin{align}\label{Abs-ODE-IVP-lambda}\left\{
\begin{aligned}
 (\lambda I+T^{-1})\frac{d}{dt}\nu(t)& = H(\nu(t)), \\
\nu(t_0)&=T w_{t_0}.
\end{aligned}
\right.
\end{align}  Writing (\ref{Abs-ODE-IVP-lambda}) in integral form, we have
 \begin{align}\label{Abs-ODE-IVP-integral-lambda}
\begin{aligned}
T^{-1}\nu  (t)& = w_{t_0}+\int_{t_0}^t(\lambda T+ I)^{-1} H\left(\nu (s)\right)ds.
\end{aligned}
\end{align}

By Lemma~\ref{complete-continuity}, $H$ is   analytic  and   $L$ is an analytic mapping from    $A\subset  l^{\infty} (\mathbb{C}^{N+1})$ to $ l^{\infty} (\mathbb{C}^{N+1})$. 

We organize the remaining part of the proof as follows: We first show with claims 1 and 2 the existence and uniqueness of  solutions $\nu_\lambda$ of   system~(\ref{Abs-ODE-IVP-integral-lambda})  and with claim 3 $\nu_\lambda$ satisfies that $\lim_{\lambda\rightarrow 0^+}T^{-1}\nu_\lambda =w_0\in  C(\Omega_{h_0}; T^{-1}(\bar{A}))$ for some $h_0>0$. Secondly, we show with claim 4 that the right hand side of  system~(\ref{Abs-ODE-IVP-integral-lambda}) is coordinate-wise convergent to that of system~(\ref{Abs-Integral-Eq}) with 
 $\lim_{n\rightarrow+\infty}H(\nu_{\lambda_n}(t))=H(\nu_0(t))$, where  $\nu_0: \Omega_{h_0}\rightarrow\bar{A}$ is a map such that $H(\nu_0)$ is continuous in $t\in \Omega_{h_0}$ and the sequence $\{\lambda_n\}_{n=1}^{\infty}$ is in $(0,\,1-\frac{1}{c})$ with $\lim_{n\rightarrow+\infty}\lambda_n=0$.  Lastly, we show with claim 5 that $H(Tw_0)=H(\nu_0)$ which implies that $w_0$ satisfies system~(\ref{Abs-Integral-Eq}) and hence it is the solution of the initial value problem (\ref{Abs-ODE-S-infty}).

Now we show the following

\textbf{Claim 1}: For every $\lambda\in (0,\,1)$, there exists $h>0$ such that  there exists 
one and only one point $\nu_\lambda\in   C(\Omega_h; \bar{A})$  such that $L(\nu_\lambda,\,\lambda)=\nu_\lambda$ and $\nu_\lambda$ is  analytic and is differentiable with respect to $\lambda$.

{\textit{Proof of  Claim 1:}}  We only need to show that $L$ is a contractive mapping in some closed neighborhood of $w_{t_0}$ in $C(\Omega_h; l^{\infty}(\mathbb{C}^{N+1}))$ where $\Omega_h=\{t\in\mathbb{C}: |t-t_0|\leq h\}$ for some $h>0$ to be determined. Denote by $\|\cdot\|_{C}$ the supremum norm on the Banach space ${C(\Omega_h; l^{\infty}(\mathbb{C}^{N+1}))}$.  For every $w_1,\,w_2\in C(\Omega_h; l^{\infty}(\mathbb{C}^{N+1}))$ we have
\begin{align}\label{eqn-condensing}
  &\|L(w_1,\,\lambda)-L(w_2,\,\lambda)\|_{C}\notag\\
=& \max_{t\in \Omega_h} \left\|((1-\lambda)I-T^{-1})w_1(t)+\int_{t_0}^tH(w_1(s)) ds\right)\notag\\
& \left.-((1-\lambda)I-T^{-1})w_2(t)-\int_{t_0}^tH(w_2(s)) ds \right\|_{l^{\infty}(\mathbb{C}^{N+1})}\notag\\
\leq &  \max_{t\in \Omega_h}  \|(1-\lambda) I -T^{-1}\|_{\mathscr{L}(l^{\infty}(\mathbb{K}^{N+1};\, l^{\infty}(\mathbb{K}^{N+1})} |w_1(t)-w_2(t)|_{l^{\infty}(\mathbb{C}^{N+1})}\notag \\
&  +\max_{t\in \Omega_h}\int_{t_0}^t \left\|H(w_1(s))-H(w_2(s))\right  \|_{l^{\infty}(\mathbb{C}^{N+1})} ds.
\end{align}
Since $H$ is analytic on $A$, there exist  constants $\delta>0$ and $l_0>0$ so that
$|H(\nu_1 )-H(\nu_2) |_{l^{\infty}(\mathbb{C}^{N+1})}\leq l_0 |\nu_1-\nu_2|_{l^{\infty}(\mathbb{C}^{N+1})}$ for every $\nu_1,\,\nu_2\in  A$ with $|\nu_1-\nu({t_0})|_{l^{\infty}(\mathbb{C}^{N+1})}\leq \delta, |\nu_2-\nu(t_0)|_{l^{\infty}(\mathbb{C}^{N+1})}\leq \delta$. 

Let $X=\{\nu\in C(\Omega_h;  \bar{A}): \max_{t\in\Omega_h}|\nu(t)-\nu({t_0})|_{l^{\infty}(\mathbb{C}^{N+1})}\leq \delta\}$. Then $X$ is a closed subset of the Banach space $C(\Omega_h; l^{\infty}(\mathbb{C}^{N+1})$.  By (\ref{eqn-condensing}) and by Lemma~\ref{Compact-operator}, we have
\begin{align*}
 \|L(w_1,\,\lambda)-L(w_2,\,\lambda)\|_{C}\leq & (1-\lambda)  \|w_1-w_2\|_{C}+ l_0 h \|w_1-w_2\|_{C}\\ = & (1-\lambda+l_0 h) \|w_1-w_2\|_{C},
\end{align*} for every $w_1,\,w_2\in X$.
Therefore, if $h\in (0,\,\frac{\lambda}{l_0})$, then $1-\lambda+l_0 h\in (0,\,1)$. Moreover,  we choose $h>0$
 small enough so that
\begin{align*}
 & \max_{t\in\Omega_h}\|L(\nu(t),\,\lambda)-\nu({t_0})\|_{l^{\infty}(\mathbb{C}^{N+1})}\\
=&\max_{t\in\Omega_h}\left\|((1-\lambda)I-T^{-1})(\nu(t)-\nu({t_0}))+\int_{t_0}^tH(\nu(s)) ds\right\|_{l^{\infty}(\mathbb{C}^{N+1})}\\
\leq & \,\delta. 
\end{align*}
Then by the Uniform Contraction Principle in Banach spaces, we know that $L(\cdot,\,\lambda): X\rightarrow X$ is a contractive mapping with a unique fixed point $\nu_\lambda\in   C(\Omega_h;  \bar{A})$ and $\nu_{\lambda}$ is analytic. Noticing that $L$ is linear in $\lambda$,  $\nu_{\lambda}$ is differentiable with respect to $\lambda$. This completes the proof of Claim 1.

\textbf{Claim 2}: There exists $h_0>0$ and   so that $\Omega_{h_0}$ is the common existence region of the fixed points $\nu_\lambda$  of $L(\nu,\,\lambda)$ for all $\lambda\in (0,\,1-1/c).$

{\textit{Proof of  Claim 2:}} 
Let $w_\lambda= T^{-1}\nu_\lambda,\,\nu_\lambda\in X$ where   $X$ is as in Claim 1.  
Note that $\nu_{\lambda}\in C(\Omega_{h_\lambda};   \bar{A})$ where $h_\lambda>0$ is a constant depending on $\lambda$. 
Let $\widetilde{M}>0$ be the supremum of $\|H(\nu)\|_{l^{\infty}(\mathbb{C}^{N+1})}$ on  $\bar{A}$. Let $0< \beta\leq +\infty$ be such that $\{t\in\mathbb{C}: |t-t_0|<\beta\}$ is the maximal existence region of $\nu_\lambda(t)$ on $\bar{A}$. If $\beta=+\infty$, then $\nu_\lambda$ can be extended to the whole complex plane $\mathbb{C}$ with $\nu_\lambda(t)\in l^{\infty}(\bar{U}\times \bar{V}))$ for all $t\in\mathbb{C}$. Otherwise, by Theorem 10.5.5 of \cite{Dieudonne}, there exists $t_1 \in\{t\in\mathbb{C}: |t-t_0|<\beta\}$  so that
$\nu_\lambda$ achieves value  in the boundary of  $A$. Let $B$  denote
the boundary of  $A$. Let $r$ be defined by
\begin{align*}
r=  \inf_{\nu\in B}\|T^{-1}(\nu-\nu_{t_0})\|_{l^{\infty}(\mathbb{C}^{N+1})}.
\end{align*}
Now we show that $r>0$. Suppose not.  Note that by Lemma~\ref{OMT},  $T^{-1}$ is compact and $B$ is closed and bounded in $l^{\infty}(\mathbb{C}^{N+1})$. Therefore $r$ is the minimum norm of a compact set. There exists $\nu^*\in B$ such that $r= \|T^{-1}(\nu^*-\nu_{t_0})\|_{l^{\infty}(\mathbb{C}^{N+1})}=0.$
 Then 
 we have $\nu_{t_0}=\nu(t_0)=\nu^*\in B$. This is a contradiction since $\nu(t_0)$ is in the interior of $A$. It follows that $r>0$.

By Lemma~\ref{Compact-operator}, we know that  $\lambda I +T^{-1}\in \mathscr{L}(l^{\infty}(\mathbb{C}^{N+1}); l^{\infty}(\mathbb{C}^{N+1}))$ has norm equal to $ \lambda+\frac{1}{c}$. Then we have
\begin{align*}
r = & \inf_{\nu\in B}\|T^{-1}(\nu-\nu(t_0))\|_{l^{\infty}(\mathbb{C}^{N+1})}\\
\leq &\inf_{\nu\in B}\|(\lambda I +T^{-1})(\nu-\nu(t_0))\|_{l^{\infty}(\mathbb{C}^{N+1})}.\\
\leq & \left\|(\lambda I +T^{-1})(\nu_\lambda(t_1)-\nu(t_0))\right\|_{l^{\infty}(\mathbb{C}^{N+1})}\\
      = & \left\| \int_{t_0}^{t_1} (\lambda I +T^{-1}) \nu_\lambda' (s)ds\right\|_{l^{\infty}(\mathbb{C}^{N+1})}\\
    \leq & \sup_{t\in\Omega_h}\int_{t_0}^t \left\|  H(\nu_\lambda (s)) \right\|_{l^{\infty}(\mathbb{C}^{N+1})}ds\\
 \leq &\widetilde{M}\beta.
\end{align*} 
It follows that $\beta\geq \frac{r}{\widetilde{M}}$. Let $h_0=\frac{r}{\widetilde{M}}$.  Then $\Omega_{h_0}$ is the common existence region of  $\nu_\lambda$   for all $\lambda\in (0,\,1-1/c).$  This completes the proof of Claim 2.

\textbf{Claim 3}: Let $\nu_\lambda$, and  $h_0$  be as in Claim 2.  There exists an analytic function $w_0\in    C(\Omega_{h_0};T^{-1}(\bar{A}))$  so that   $\lim_{\lambda\rightarrow 0^+} \|T^{-1}\nu_{\lambda}-w_0\|_{C(\Omega_{h_0}; l^{\infty}(\mathbb{C}^{N+1}))}=0$.

{\textit{Proof of  Claim 3:}} By Claim 2, we have $w_\lambda=T^{-1}\nu_\lambda\in    C(\Omega_{h_0}; T^{-1}(\bar{A}))$.  Moreover,  the uniformly bounded set $\left\{w_{\lambda}: \lambda\in (0,\,1-1/c)\right\}$ is  compact  in $C(\Omega_{h_0}; T^{-1}(\bar{A}))$, by the Arzel\'{a}--Ascoli theorem, since for every $\varepsilon>0$ there exists $\tilde{\delta}=\frac{\varepsilon}{\widetilde{M}} >0$ so that $|t-t'|<\tilde{\delta}$ implies that
\begin{align*}
\left\|w_{\lambda}(t)-w_{\lambda}(t')\right\|_{l^{\infty}(\mathbb{C}^{N+1})}
\leq &  \left\|\int_{t'}^t (\lambda T+I)^{-1} H(Tw_{\lambda} (s))ds \right\|_{l^{\infty}(\mathbb{C}^{N+1})}\\
\leq & \widetilde{M} \tilde{\delta}\\
=&\varepsilon,
\end{align*}
where $\widetilde{M}>0$ was defined in the proof of Claim 2, and Lemma~\ref{Extension-operator} was applied to obtain the second inequality. 
Therefore, there exists $w_0\in C(\Omega_{h_0};  \bar{A}) $ so that
\begin{align}\label{w-uniform}\lim_{\lambda\rightarrow 0}\|w_{\lambda}-w_0\|_{C(\Omega_{h_0};  \bar{A})}=0.
\end{align} Since $\left\{w_{\lambda}\right\}_{\lambda\in (0,\,1-1/c)}$ is a set of analytic functions in norm $\|\cdot\|_{ C(\Omega_{h_0}; l_c^{\infty}(\mathbb{C}^{N+1}))}$ and analytic in  norm $\|\cdot\|_{C(\Omega_{h_0}; l^{\infty}(\mathbb{C}^{N+1}))}$,  $w_0$ is also analytic  in norm $\|\cdot\|_{C(\Omega_{h_0}; l^{\infty}(\mathbb{C}^{N+1}))}$.

Now we show that $w_0\in C(\Omega_{h_0}; T^{-1}(A))$. First we show that $w_0\in C(\Omega_{h_0}; l_c^{\infty}(\mathbb{C}^{N+1})$.
Suppose that $w_0\not\in C(\Omega_{h_0}; l_c^{\infty}(\mathbb{C}^{N+1})$. Then for every $K>0$ there exists $j_0\in\mathbb{N}$ such that $\sup_{t\in \Omega_{h_0}}c^{j_0}|(w_0)_{j_0}(t)|>K$. That is,
\begin{align}\label{MJ0}
 \sup_{t\in \Omega_{h_0}}|(w_0)_{j_0}(t)|>\frac{K}{c^{j_0}}.
\end{align}
On the other hand, it follows from
$\lim_{\lambda\rightarrow 0^+}\|w_{\lambda}-w_0\|_C=0$, that
for every $\epsilon>0$, there exists $\delta>0$ such that for every $\lambda\in (0,\,\delta)$, we have
\begin{align*}
 \sup_{t\in \Omega_{h_0}}\sup_{j\in\mathbb{N}}|(w_0)_{j}(t)-(w_\lambda)_{j}(t)|<\epsilon, 
\end{align*}
which leads to
\begin{align*}
 \sup_{t\in \Omega_{h_0}} |(w_0)_{j}(t)|-\sup_{t\in \Omega_{h_0}}|(w_\lambda)_{j}(t)|<\epsilon,\,\mbox{for every $j\in\mathbb{N}$}. 
\end{align*}
It follows that
\begin{align}\label{tri-inequality}
 \sup_{t\in \Omega_{h_0}}|(w_\lambda)_{j}(t)|>\sup_{t\in \Omega_{h_0}} |(w_0)_{j}(t)|-\epsilon,\,\mbox{for every $j\in\mathbb{N}$}. 
\end{align}
Choosing $j=j_0$ and $\epsilon=\frac{K}{2c^{j_0}}$ in (\ref{tri-inequality}), then by (\ref{MJ0})  we obtain that
\begin{align*}
 \sup_{t\in \Omega_{h_0}}|(w_\lambda)_{j_0}(t)|& \geq \sup_{t\in \Omega_{h_0}} |(w_0)_{j_0}(t)|-\frac{K}{2c^{j_0}}\\
 & >\frac{K}{2c^{j_0}},
\end{align*}which leads to
$ \sup_{t\in \Omega_{h_0}}|c^{j_0}(w_\lambda)_{j_0}(t)|>K/2$ for every $\lambda\in (0,\,\delta)$. That is, $w_\lambda\not\in C(\Omega_{h_0}; l_c^{\infty}(\mathbb{C}^{N+1}))$ as $\lambda\rightarrow 0$ and hence $\nu_\lambda=Tw_\lambda\not\in C(\Omega_{h_0}; l^{\infty}(\mathbb{C}^{N+1}))$. This is a contradiction and hence $w_0\in C(\Omega_{h_0}; l_c^{\infty}(\mathbb{C}^{N+1}))$.  

Next we show that $w_0\in C(\Omega_{h_0}; T^{-1}(\bar{A}))$. Suppose not. Since  $w_0\in C(\Omega_{h_0}; l_c^{\infty}(\mathbb{C}^{N+1}))$, there exists $t^*\in\Omega_{h_0}$ so that $w_0(t^*)\in l_c^{\infty}(\mathbb{C}^{N+1})\setminus  T^{-1}(\bar{A})$.  By (\ref{w-uniform}) we have
\begin{align}\label{w-t-star}
\lim_{\lambda\rightarrow 0}\|w_\lambda(t^*)-w_0(t^*)\|_{l^{\infty}(\mathbb{C}^{N+1})}=0.
\end{align}
Since $\left\{w_{\lambda}: \lambda\in (0,\,1-1/c)\right\}$ is  uniformly bounded  in $C(\Omega_{h_0}; T^{-1}(\bar{A}))$, there exists a closed ball 
 $B'$ in $T^{-1}(\bar{A})$ which contains the  closure of $\{w_\lambda(t^*)\}_{\lambda\in (0,\,1-1/c)}$.  Then by Lemma~\ref{Banach-spaces}
and by (\ref{w-t-star}), we have $w_0(t^*)\in B'\subset T^{-1}(\bar{A})$ which is a contradiction. This completes the proof of Claim 3.

{\bf Claim 4:} Let  $h_0$ be as in Claim 2.  There exists a map $\nu_0: \Omega_{h_0}\rightarrow \bar{A}$ such that $H(\nu_0)$ is continuous and is such that for every $t\in \Omega_{h_0}$,
 there exists a sequence $\{\lambda_n\}_{n=1}^{\infty}\subset  (0,\,1-\frac{1}{c})$ with $\lim_{n\rightarrow+\infty}\lambda_n=0$  such that
 $\lim_{n\rightarrow+\infty}H(\nu_{\lambda_n}(t))=H(\nu_0(t)).$

 {\it Proof of Claim 4}: Note that by Claim 1, 
$\nu_\lambda \in C(\Omega_{h_0};T^{-1}(\bar{A}))$ is uniformly bounded with respect to $\lambda\in (0,\,1-\frac{1}{c})$. 
Since by Lemma~\ref{complete-continuity} $H$ is completely continuous, for every $t\in \Omega_{h_0}$, the set \[
\left\{H(\nu_\lambda(t)): \lambda\in \left(0,\,1-\frac{1}{c}\right)\right\},
\]is pre-compact in $l^\infty(\mathbb{C}^{N+1})$.
 So there exists a sequence $\{\lambda_n\}_{n=1}^{\infty}\subset  (0,\,1-\frac{1}{c})$ with $\lim_{n\rightarrow+\infty}\lambda_n=0$ and $\nu_0(t)\in T^{-1}(\bar{A})$, where $T^{-1}(\bar{A})$ is compact,  such that
 \begin{align}\label{new-limit-01}  
 \lim_{n\rightarrow+\infty}H(\nu_{\lambda_n}(t))=\lim_{n\rightarrow+\infty} H(Tw_{\lambda_n}(t))=H(\nu_0(t)).
\end{align}
 Next we show that $H(\nu_0): \Omega_{h_0}\ni t\rightarrow H(\nu_0(t)) \in l^\infty(\mathbb{C}^{N+1})$ is continuous in $t\in \Omega_{h_0}$. Let $t\in \Omega_{h_0}$. By (\ref{new-limit-01}), for every $\epsilon>0$, there exists $N_1\in\mathbb{N}$ such that for every $n>N_1$, 
 \begin{align}\label{nu-1}
 \|H(\nu_{\lambda_n}(t))-H(\nu_0(t))\|_{ l^\infty(\mathbb{C}^{N+1})}<\frac{\epsilon}{3},
 \end{align}
 Since $H(\nu_{\lambda_n})$ is continuous, there exists $\delta>0$ such that for every $t'\in  \Omega_{h_0} $ with $|t-t'|<\delta$ we have
  \begin{align}\label{nu-2}
 \|H(\nu_{\lambda_n}(t))-H(\nu_{\lambda_n}(t'))\|_{ l^\infty(\mathbb{C}^{N+1})}<\frac{\epsilon}{3}.
 \end{align}
Taking subsequence of $\{\lambda_n\}$ if necessary, by (\ref{new-limit-01}) there exists $N'$ such that for every $n>N'$,  we have
 \begin{align}\label{nu-3}
 \|H(\nu_{\lambda_n}(t'))-H(\nu_0(t'))\|_{ l^\infty(\mathbb{C}^{N+1})}<\frac{\epsilon}{3}.
 \end{align}
 By (\ref{nu-1}), (\ref{nu-2}) and (\ref{nu-3}) we have for $n>\max\{N_1,\,N'\}$,
 \[
 \|H(\nu_0(t))-H(\nu_0(t'))\|_{ l^\infty(\mathbb{C}^{N+1})}<\epsilon.
 \]That is $H(\nu_0)$ is continuous. This completes the proof of Claim 4.

 {\bf Claim 5:} Let  $h_0$ be as in Claim 2, $w_0$ be as in Claim 3,  $\nu_0$  be as in Claim 4. Then $H(Tw_0)=H(\nu_0)$ and $w_0$ is the solution of  the initial value problem (\ref{Abs-ODE-IVP}).

 {\it Proof of Claim 5}:
 It follows from  Claim 3 that $w_0$ is in $C(\Omega_{h_0}; T^{-1}(\bar{A}))$ and  $w_0$ is the limit of $w_\lambda$ as $\lambda\rightarrow 0^+$ in the norm $\|\cdot\|_{C(\Omega_{h_0}; l^\infty(\mathbb{C}^{N+1}))}$.   We first show that $Tw_{\lambda} $ converges to $Tw_0$ coordinate-wise. That is, for every $j\in\mathbb{N}$, 
 \begin{align}\label{claim-5-1}
 \lim_{\lambda\rightarrow 0}\sup_{t\in \Omega_{h_0}}|(Tw_{\lambda})_j(t)-(Tw_0)_j(t)|= 0.
 \end{align}
If not, there exists $j_0\in\mathbb{N}$ and $\epsilon_0>0$ and a sequence  $\{\lambda_{n}\}_{n=1}^{\infty}\subset (0,\,1-\frac{1}{c}) $ converging to 0 such that
\[
\sup_{t\in \Omega_{h_0}}|(w_{\lambda_{n}})_{j_0}(t)-(w_0)_{j_0}(t)|\geq \frac{\epsilon_0}{c^{j_0}}, \mbox{for all } n\in\mathbb{N},
\]which leads to
\[
\sup_{t\in \Omega_{h_0}}\sup_{j\in\mathbb{N}}|(w_{\lambda_{n}})_{j}(t)-(w_0)_{j}(t)|\geq\sup_{t\in \Omega_{h_0}}|(w_{\lambda_{n}})_{j_0}(t)-(w_0)_{j_0}(t)|\geq \frac{\epsilon_0}{c^{j_0}}, 
\]
 for all $n\in\mathbb{N}$.  This is a contradiction, since $w_0$ is the limit of $w_\lambda$ as $\lambda\rightarrow 0^+$ in the norm $\|\cdot\|_{C(\Omega_{h_0}; l^\infty(\mathbb{C}^{N+1}))}$.

Noticing that each coordinate of $H(Tw_\lambda)$ involves only finitely many coordinates of  $Tw_\lambda$ and $H$ is analytic.  $H(Tw_{\lambda})$ converges to $H(Tw_0)$ coordinate-wise as $Tw_{\lambda} $ converges to $Tw_0$ coordinate-wise with $\lambda\rightarrow 0^+$. By Claim 4, we have
\begin{align}\label{eqn-last0}
H(\nu_0)=H(Tw_0).
\end{align}
Noting that by  Lemma~\ref{Extension-operator},  $(\lambda T+I)^{-1}\in \mathscr{L}(l^{\infty}(\mathbb{C}^{N+1}); l^{\infty}(\mathbb{C}^{N+1}))$ is bounded for every $\lambda\in[0,\,1)$. Notice that $\nu_\lambda$, $\lambda\in (0,\,1-\frac{1}{c})$, satisfies (\ref{Abs-ODE-IVP-integral-lambda}). On the one hand, by Claim 3 we have
\begin{align}\label{eqn-last2}
\lim_{\lambda\rightarrow 0^+}\|T^{-1}\nu_{\lambda} (t)-w_0(t)\|_{l^\infty(\mathbb{C}^{N+1})}=0.
\end{align}On the other hand, for every $j\in\mathbb{N}$ and $t\in\Omega_{h_0}$ we have 
\begin{align}\label{eqn-last}
& \left|\int_{t_0}^t\left[(\lambda T+ I)^{-1} H\right]_j\left(\nu_{\lambda}(s)\right)ds-\int_{t_0}^t H_j(\nu_0(s))ds\right|\notag\\
 = & \left|\int_{t_0}^t\left[(\lambda T+ I)^{-1} H\right]_j\left(\nu_{\lambda}(s)\right)- H_j(\nu_0(s))ds\right|\notag\\
= &\left|\int_{t_0}^t  \frac{1}{(\lambda c^j+ 1)} H_j\left(\nu_{\lambda}(s)\right)- H_j(\nu_0(s)) ds\right|\notag\\
= &\left|\int_{t_0}^t \left( \frac{1}{(\lambda c^j+ 1)} (H_j\left(\nu_{\lambda}(s)\right)- H_j(\nu_0(s)))-\frac{\lambda c^j}{\lambda c^j+1} H_j(\nu_0(s))\right)ds\right|\notag\\
\leq &\left|\int_{t_0}^t \frac{1}{(\lambda c^j+ 1)} (H_j\left(\nu_{\lambda}(s)\right)- H_j(\nu_0(s)))ds\right|+\left|\int_{t_0}^t\frac{\lambda c^j}{\lambda c^j+1} H_j(\nu_0(s)) ds\right|\notag\\
= &  \frac{1}{(\lambda c^j+ 1)} \left|\int_{t_0}^t(H_j\left(\nu_{\lambda}(s)\right)- H_j(\nu_0(s)))ds\right|+\frac{\lambda c^j}{\lambda c^j+1}\left|\int_{t_0}^t H_j(\nu_0(s)) ds\right|\notag\\
= &  \frac{1}{(\lambda c^j+ 1)} \left|\int_{t_0}^t(H_j\left(Tw_{\lambda}(s)\right)- H_j(Tw_0(s)))ds\right|+\frac{\lambda c^j}{\lambda c^j+1}\left|\int_{t_0}^t H_j(\nu_0(s)) ds\right|
\end{align}  where $\xi\in\Omega_{h_0}$ and $H_j$ denotes the $j$-th coordinate of $H$. Since  $H(Tw_{\lambda})$ converges to $H(Tw_0)$ coordinate-wise as $Tw_{\lambda} $ converges to $Tw_0$ coordinate-wise with $\lambda\rightarrow 0^+$, uniformly with respect to $t\in\Omega_{h_0}$.
Letting $\lambda \rightarrow 0^+$   in (\ref{eqn-last}), we have for every $j\in\mathbb{N}$ and $t\in\Omega_{h_0}$,
 \begin{align}\label{eqn-last3}  \left|\int_{t_0}^t\left[(\lambda_n T+ I)^{-1} H\right]_j\left(\nu_{\lambda}(s)\right)ds-\int_{t_0}^t H_j(\nu_0(s))ds\right|\rightarrow 0 \mbox{ as $\lambda\rightarrow0^+$}.
 \end{align}
 By (\ref{eqn-last2}) and (\ref{eqn-last3}), we have for every   $t\in\Omega_{h_0}$,
 \[
 w_0(t)=w_{t_0}+\int_{t_0}^tH(\nu_0(s))ds,
 \]
 which combined with (\ref{eqn-last0}) gives
  \[
 w_0(t)=w_{t_0}+\int_{t_0}^tH(Tw_0(s))ds.
 \]
That is, $w_0$ is a solution of the initial value problem (\ref{Abs-ODE-IVP}). By analyticity of $w_0$, it is the unique solution of (\ref{Abs-ODE-IVP}) which is the complex extension of the real-valued solution $w=\left((y_1,\,z_1),\,(y_2,\,z_2),\,\cdots\right)\in C(\mathbb{R}; l_c^{\infty}(\mathbb{R}^{N+1}))$ at $t=t_0\in\mathbb{R}$. It follows that $(x,\,\tau)=(cy_1,\,cz_1)$ is analytic at $t_0$. Since $t_0\in\mathbb{R}$ is arbitrary, $(x,\,\tau)$ is analytic on $\mathbb{R}$. This completes the proof of Claim 5 and that of the theorem.\qed
 \end{proof}

\section{Example}\label{section-3}
In this section, we present an example from important applications. We now  study the analyticity of  periodic
solutions for the following delay differential equations with adaptive delay:
\begin{align}\label{ch3-eqn-4-23}
\left\{
\begin{aligned}
\dot{x}_1(t)&=-\mu x_1(t)+\sigma b( x_2(t-\tau(t))),\\
\dot{x}_2(t)&=-\mu x_2(t)+\sigma b( x_1(t-\tau(t))),\\
\dot{\tau}(t)&=1-h(x(t))\cdot(1+\tanh\tau(t)),
\end{aligned}
\right.
\end{align}
where $x(t)=(x_1(t),\,x_2(t))\in\mathbb{R}^2$, $\tau(t)\in\mathbb{R}$, $\tanh(\tau)= (e^{2\tau}-1)/(e^{2\tau}+1)$ and $\mu>0$ is a constant. 
We make the following assumptions:
\begin{description}
\item[$(\alpha_1)$] $b: \mathbb{R}\rightarrow\mathbb{R}$ and $h: \mathbb{R}^2\rightarrow\mathbb{R}$   are continuously differentiable functions with $b'(0)=-1$;
\item[$(\alpha_2)$] There exist  $h_0<h_1$ in $(1/2,\,1)$ such that $h_1> h(x)> h_0$ for all $x\in \mathbb{R}^2$;
\item[$(\alpha_3)$] $b$ is decreasing on $\mathbb{R}$ and the map $\mathbb{R}\ni y\rightarrow yb(y)\in\mathbb{R}$ is injective;
\item[$(\alpha_4)$]$yb(y)<0$ for $y\neq 0$, and there exists a continuous function $M:\mathbb{R}\ni \sigma\rightarrow M(\sigma)\in (0,\,+\infty)$ so that
\[
\frac{b(y)}{y}>-\frac{\mu}{2|\sigma|},
\]
for $|y|\geq M(\sigma)$;
 \item[$(\alpha_5)$] $h_0>(1+e^{-\pi})/2$   and there exists $\epsilon>0$ so that $b$ and $h$ have analytic complex extensions on
\[
 U_0\times V_0=\{(p,\,q)\in \mathbb{C}^2\times\mathbb{C}:  \Re(p,\,q)\in \overline{\Omega}_1,\, |\Im(p,\,q)|\leq\epsilon\}
\] where $\Omega_1= (-M(\sigma),\,M(\sigma))\times (-M(\sigma),\,M(\sigma))\times \left(0,\,-\frac{\ln (2h_0-1)}{2}\right).$
\end{description}
\begin{lemma}[\cite{HWZ}]\label{ch3-lemma-4-6}
Assume $(\alpha_1)$--$(\alpha_4)$ hold. Then  the range of every   periodic solution
$(x_1,\,x_2,\,\tau)$ of $($\ref{ch3-eqn-4-23}$\,)$ with $\sigma\in\mathbb{R}$ is contained in
\[
\Omega_1=(-M(\sigma),\,M(\sigma))\times(-M(\sigma),\,M(\sigma))\times \left(0, -\frac{\ln(2h_0-1)}{2}\right).
\]
\end{lemma}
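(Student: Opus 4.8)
The plan is to treat the $\tau$-component and the $(x_1,x_2)$-components separately, in each case exploiting the elementary fact that a $p$-periodic $C^1$ function attains its maximum and minimum over one period at interior points where its derivative vanishes. Let $(x_1,x_2,\tau)$ be a periodic solution. For the third coordinate, let $t_{\min},t_{\max}$ be points where $\tau$ attains its minimum and maximum; at both points $\dot\tau=0$, i.e. $h(x)(1+\tanh\tau)=1$, so $1+\tanh\tau=1/h(x)$. At $t_{\min}$, since $h(x(t_{\min}))<h_1<1$ by $(\alpha_2)$, we get $\tanh\tau(t_{\min})=1/h(x(t_{\min}))-1>0$, whence $\tau(t_{\min})>0$ and therefore $\tau(t)>0$ for all $t$. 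At $t_{\max}$, since $h(x(t_{\max}))>h_0>1/2$, we get $\tanh\tau(t_{\max})=1/h(x(t_{\max}))-1<(1-h_0)/h_0$. A direct computation with $\tau^*=-\tfrac12\ln(2h_0-1)$ shows $e^{2\tau^*}=1/(2h_0-1)$ and hence $\tanh\tau^*=(1-h_0)/h_0$, so by monotonicity of $\tanh$ we conclude $\tau(t)\le\tau(t_{\max})<\tau^*$. This delivers the third factor of $\Omega_1$.

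Next I would bound $x_1,x_2$. Set $m=\sup_t\max(|x_1(t)|,|x_2(t)|)$, which is attained (periodicity and continuity) by some component $x_i$ at a time $t^*$; if $m=0$ the inclusion is immediate, so assume $m>0$. Since $|x_i|$ is globally maximized at $t^*$, the function $x_i$ has an extremum there and $\dot x_i(t^*)=0$, so the $i$-th equation of $(\ref{ch3-eqn-4-23})$ gives $\mu m=|\sigma|\,|b(y)|$ with $y=x_j(t^*-\tau(t^*))$, $j\ne i$, and $|y|\le m$. From $(\alpha_3)$--$(\alpha_4)$ one first records that $b(0)=0$, that $b<0$ on $(0,\infty)$ and $b>0$ on $(-\infty,0)$, and that $|b|$ is nondecreasing in $|y|$ (because $b$ is decreasing); hence $|b(y)|\le\max(|b(m)|,|b(-m)|)$. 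Arguing by contradiction, suppose $m\ge M(\sigma)$. Rewriting $(\alpha_4)$ as $|b(y)|<\tfrac{\mu}{2|\sigma|}|y|$ for $|y|\ge M(\sigma)$ and applying it at $y=\pm m$ gives $\max(|b(m)|,|b(-m)|)<\tfrac{\mu}{2|\sigma|}m$, so $\mu m=|\sigma|\,|b(y)|<\tfrac{\mu}{2}m$, a contradiction. Therefore $m<M(\sigma)$, which gives the first two factors of $\Omega_1$ and completes the argument.

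The two extremum computations for $\tau$ are routine; the step I expect to require the most care is the $x$-bound. There one must (i) pass correctly from the maximum of $|x_i(t^*)|$ to a bound on the \emph{delayed} value $|y|\le m$ (which is exactly where the global supremum over both components and all times is used), and (ii) convert the one-sided growth condition $(\alpha_4)$ into the clean uniform estimate $|b(y)|<\tfrac{\mu}{2|\sigma|}|y|$ for $|y|\ge M(\sigma)$, for which the sign pattern and monotonicity of $b$ extracted from $(\alpha_3)$--$(\alpha_4)$ are essential. I note that the injectivity half of $(\alpha_3)$ and the assumption $b'(0)=-1$ of $(\alpha_1)$ play no role in this particular lemma and are presumably needed elsewhere.
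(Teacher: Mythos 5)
Your argument is correct, but there is no internal proof to compare it against: this paper does not prove Lemma~\ref{ch3-lemma-4-6} at all --- the statement is imported verbatim from \cite{HWZ} (hence the citation in the lemma header) and is used only as a black box to verify the compact-range hypothesis of Theorem~\ref{Analyticity-th} in the example. Judged on its own merits, your proof works. The $\tau$-bounds are exactly right: at interior extrema of the periodic function $\tau$ one has $\dot\tau=0$, hence $1+\tanh\tau=1/h(x)$, and $(\alpha_2)$ together with the computation $\tanh\bigl(-\tfrac12\ln(2h_0-1)\bigr)=(1-h_0)/h_0$ pins the range of $\tau$ into $\left(0,-\tfrac12\ln(2h_0-1)\right)$. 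The $x$-bound is also sound, and the two points you flagged are handled correctly: defining $m$ as the supremum over \emph{both} components and all times is what makes the delayed value satisfy $|y|\le m$, and the conversion of $(\alpha_4)$ into $|b(y)|<\tfrac{\mu}{2|\sigma|}|y|$ for $|y|\ge M(\sigma)$ rests on the sign pattern $yb(y)<0$ and the monotonicity of $b$, which also force $b(0)=0$ by continuity. One small correction to your closing remark: that continuity of $b$ is part of $(\alpha_1)$, so $(\alpha_1)$ is not entirely idle here, even though the specific normalization $b'(0)=-1$ is indeed unused (it matters for the bifurcation/continuation analysis in \cite{HWZ}, as does the injectivity in $(\alpha_3)$). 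Two trivial loose ends worth closing explicitly: if $\sigma=0$ the quantity $\tfrac{\mu}{2|\sigma|}$ is meaningless, but then $\mu m=|\sigma|\,|b(y)|=0$ contradicts $m>0$ directly, so the conclusion still holds; and your inequalities are strict ($|x_i(t)|\le m<M(\sigma)$ and $0<\tau(t_{\min})\le\tau(t)\le\tau(t_{\max})<-\tfrac12\ln(2h_0-1)$), which is needed since $\Omega_1$ is open.
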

\begin{theorem}
 Assume that $(\alpha_1)$--$(\alpha_5)$ hold. Then all the periodic solutions of (\ref{ch3-eqn-4-23}) are analytic on $\mathbb{R}$.
\end{theorem}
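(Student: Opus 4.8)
The plan is to recognize (\ref{ch3-eqn-4-23}) as the special case $N=2$, $M=1$ of the abstract system (\ref{eqn-2}), verify hypotheses (A1)--(A2) for the associated nonlinearities, produce a compact set containing the range of any periodic orbit by means of Lemma~\ref{ch3-lemma-4-6}, and then invoke Theorem~\ref{Analyticity-th}. Writing $x=(x_1,x_2)$, the first two equations read off as
\[ f(\theta_1,\theta_2)=\bigl(-\mu(\theta_1)_1+\sigma b((\theta_2)_2),\,-\mu(\theta_1)_2+\sigma b((\theta_2)_1)\bigr), \]
while the $\tau$-equation depends only on $x(t)$ and $\tau(t)$, so $M=1$ and
\[ g(\gamma_1,\gamma_2)=1-h(\gamma_1)\bigl(1+\tanh\gamma_2\bigr). \]
Here I would take $U\subset\mathbb{C}^2$ and $V\subset\mathbb{C}$ to be the open sets whose closures are the $x$- and $\tau$-slices of the domain $U_0\times V_0$ of $(\alpha_5)$, that is $\Re$ in $[-M(\sigma),M(\sigma)]^2$ respectively $[0,-\tfrac{\ln(2h_0-1)}{2}]$, with $|\Im|\le\epsilon$.

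To verify (A1), note that $(\alpha_5)$ furnishes analytic extensions of $b$ and $h$ to these domains, so $f$ is analytic on $U\times U$; for $g$ one must add that $\tanh$ is analytic on $\overline{V}$, which holds since its poles lie at $i\pi(k+\tfrac12)$ while $|\Im\gamma_2|\le\epsilon<\pi/2$, the bound $h_0>(1+e^{-\pi})/2$ keeping $-\tfrac{\ln(2h_0-1)}{2}<\pi/2$ so that $\overline V$ stays inside the analyticity strip. For (A2) I would use $1-g(\gamma_1,\gamma_2)=h(\gamma_1)(1+\tanh\gamma_2)$ and evaluate first at real arguments: by $(\alpha_2)$ one has $h\in[h_0,h_1]\subset(1/2,1)$, and since $\tanh$ is increasing with $1+\tanh 0=1$ and $1+\tanh\bigl(-\tfrac{\ln(2h_0-1)}{2}\bigr)=1/h_0$ (from $e^{2\tau}=1/(2h_0-1)$), it follows that $1+\tanh\tau\in[1,1/h_0]$. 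Hence the real-argument values of $1-g$ lie in $[h_0,h_1/h_0]$, and because $h_0>1/2$ while $1<h_1/h_0<2$, this interval is strictly contained in some $(l,c)$ with $l\in(0,1)$ and $c>1$, that is, inside the open disk $\{w:|w-\tfrac{c+l}{2}|<\tfrac{c-l}{2}\}$. Shrinking $\epsilon$ and using continuity of $h$ and $\tanh$ on the compact set $\overline U\times\overline V$, the values of $1-g$ remain in this disk for all complex arguments, which is precisely (A2).

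Finally, Lemma~\ref{ch3-lemma-4-6} shows that any periodic solution $(x_1,x_2,\tau)$ of (\ref{ch3-eqn-4-23}) has range in the open box $\Omega_1$. This range, being the continuous image of a full period $[0,p]$, is compact, and it embeds with zero imaginary part as a compact set $Q\subset U\times V$, so the compactness hypothesis of Theorem~\ref{Analyticity-th} is satisfied. Theorem~\ref{Analyticity-th} then yields that every periodic solution is analytic on $\mathbb{R}$.

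I expect the genuine work to be the verification of (A2) on the complex closure: one must control $1+\tanh\gamma_2$ for $\gamma_2$ off the real axis and confirm that the product $h(\gamma_1)(1+\tanh\gamma_2)$ does not leave the admissible disk. This is exactly where the numerical margins of $(\alpha_2)$ (that $h_0$ and $h_1$ lie strictly between $1/2$ and $1$) and $(\alpha_5)$ (that $h_0>(1+e^{-\pi})/2$, forcing $\Re\gamma_2<\pi/2$) are used, together with a sufficiently small choice of $\epsilon$ to keep the complex perturbation inside the disk.
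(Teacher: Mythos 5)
Your proposal is correct and takes essentially the same approach as the paper: cast (\ref{ch3-eqn-4-23}) as the case $N=2$, $M=1$ of (\ref{eqn-2}) with $1-g=h(\gamma_1)(1+\tanh\gamma_2)$, verify (A1)--(A2) on a thin complex neighborhood of $\overline{\Omega}_1$ via the real-argument bounds $1-g\in(h_0,\,h_1/h_0)\subset(1/2,\,2)$ plus a continuity/compactness perturbation in the imaginary direction, and conclude from Lemma~\ref{ch3-lemma-4-6} and Theorem~\ref{Analyticity-th}. The differences are cosmetic: the paper fixes the constants $l=1/2$, $c=e$ rather than a generic pair $(l,c)$, and it justifies analyticity of $\tanh$ on the disk $|q|<\pi/2$ (which is precisely where $h_0>(1+e^{-\pi})/2$ enters), whereas your strip argument $|\Im q|<\pi/2$ reaches the same conclusion.
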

\begin{proof} By Lemma~\ref{ch3-lemma-4-6}, the range of every   periodic solution
$(x_1,\,x_2,\,\tau)$ of $($\ref{ch3-eqn-4-23}$\,)$ with $\sigma\in\mathbb{R}$ is contained in
$\Omega_1$. Now we apply Theorem~\ref{Analyticity-th}.  Let $l=1/2\in (0,\,1)$. For every $(x(t),\,\tau(t))=(x_1(t),\,x_2(t),\,\tau(t))\in \overline{\Omega}_1$, $t\in\mathbb{R}$, we have \[1\leq 1+\tanh \tau(t)\leq\frac{1}{h_0}<\frac{2}{1+e^{-\pi}}\] and  hence by $(\alpha_2)$ and $(\alpha_5)$ we obtain 
\begin{align*}
  & 1-(1-h(x(t))\cdot(1+\tanh\tau(t)))-\frac{e+l}{2}\\
= &\, h(x(t))\cdot(1+\tanh\tau(t)) -\frac{e+l}{2}\\
< &\, 1+\tanh \tau(t)-\frac{e+l}{2}\\
<  &\frac{2}{1+e^{-\pi}}-\frac{e+l}{2}\\
< & \, \frac{e-l}{2},
\intertext{and}
 & 1-(1-h(x(t))\cdot(1+\tanh\tau(t)))-\frac{e+l}{2}\\
= &\, h(x(t))\cdot(1+\tanh\tau(t)) -\frac{e+l}{2}\\
> &\, \frac{1+e^{-\pi}}{2}(1+\tanh \tau(t))-\frac{e+l}{2}\\
\geq  &\frac{1+e^{-\pi}}{2}-\frac{e+l}{2}\\
>& \, -\frac{e-l}{2}.
\end{align*}Therefore, we have $| 1-(1-h(x(t))\cdot(1+\tanh\tau(t)))-\frac{e+l}{2}|<\frac{e-l}{2}$ for all $(x(t),\,\tau(t))\in\overline{\Omega}_1$. Note that $1>h_0>(1+e^{-\pi})/2$ and $(x(t),\,\tau(t))\in\overline{\Omega}_1$ imply that $0< \tau(t)<\frac{\pi}{2}$. And the complex extension of $1+\tanh q$ is analytic for $|q|<\frac{\pi}{2},\,q\in \mathbb{C}$. Then by $(\alpha_5)$ we can choose $\epsilon_0\in (0,\,\epsilon)$
 small enough so that $| 1-(1-h(p)\cdot(1+\tanh q))-\frac{e+l}{2}|<\frac{e-l}{2}$ for all $(p,\,q)\in   U\times V$ where 
\begin{align*}
 U\times V= \{(p,\,q)\in \mathbb{C}^2\times\mathbb{C}:  \Re(p,\,q)\in  {\Omega}_1,\, |\Im(p,\,q)|<\epsilon_0\}\subset U_0\times V_0.
\end{align*}
Then by applying Theorem~\ref{Analyticity-th} on $U\times V$,  analyticity of all the periodic solutions of (\ref{ch3-eqn-4-23}) follows.
\hfill\hspace*{1em}\qed
\end{proof}

%

\end{document}